\newtheorem{theorem}{Theorem}[section]
\newtheorem{lemma}[theorem]{Lemma}
\theoremstyle{definition}
\newtheorem{definition}[theorem]{Definition}
\newtheorem{example}[theorem]{Example}
\newtheorem{proposition}[theorem]{Proposition}
\newtheorem{corollary}[theorem]{Corollary}
\theoremstyle{remark}
\newtheorem{remark}[theorem]{Remark}
\numberwithin{equation}{section}
\begin{document}
	\title[product of $k$ elements in a finite ring is zero]{The probability that the product of $k$ elements in a finite ring is zero}

	\author{Dibyasman Sarma}
	\address{Department of Mathematics, National Institute of Technology Meghalaya\\ Sohra, Meghalaya
		793108, India}
	\email{dibyasman.sarma@gmail.com}

	\author{Tikaram Subedi}
	\address{Department of Mathematics, National Institute of Technology Meghalaya\\ Sohra, Meghalaya
		793108, India}
	\email{tsubedi2010@gmail.com}
	
	\subjclass[2010]{13M05, 13A99}
	\keywords{Finite commutative ring, zero divisor, annihilator, probability}
	
	
	
	\begin{abstract}
		In this paper, for a fixed integer $k\ge 2$, we study the probability that the  product of $k$ randomly chosen elements in a finite commutative ring $R$ is zero, which we denote by $zp_{_k}(R)$. We investigate bounds for $zp_{_k}(R)$ that turn out to be sharp bounds for certain classes of rings. Further, we determine the maximum value of $zp_{_k}(R)$ that can be obtained for any ring $R$, and  classify all rings within some specific range of $zp_{_k}(R)$.
	\end{abstract}
	
	\maketitle
	
\section{Introduction}
Providing numerical measurements on how close an algebraic structure is from satisfying certain properties has been of interests to several mathematicians over the last several years. In this regard, mathematicians have studied the concept of probability that a finite algebraic structure satisfies certain properties. Erd\"os and Tur\'an \cite{erdos} started the study of commutativity in finite groups by using statistical approaches and their work showed continuous extensions. In recent years, the commutativity of finite rings has been studied via probability (for example see, \cite{buckley1,buckley2,buckley3,dutta, machale}). Also, the probability that the product of two randomly chosen elements in a finite ring have product zero called the nullity degree was studied in \cite{emskhani1} and continued in \cite{dolzan, emskhani2}. The nullity degree of a ring gives a measure of how close the ring multiplication is to being trivial multiplication. Naturally, the  question concerning if the concept of nullity degree can be extended to any number of elements arises.

\medspace

In this paper, for any ring $R$ with identity and integer $k\ge2$, we investigate the probability that the product of  $k$ randomly chosen elements in $R$ is zero, which we denote by $zp_{_k}(R)$. Thus, 
\[zp_{_k}(R) =\frac{\left|\left\{(x_1,x_2,...,x_k)\in R^k:x_1x_2...x_k=0\right\}\right|}{|R|^k}.\]
First, we obtain an upper and a lower bounds for $zp_{_k}(R)$ that are recursive and depend on the structure of $R$, and then classify rings for which the same turn out to be sharp bounds.  Subsequently, we obtain an explicit form of the upper bound for $zp_{_k}(R)$. Finally, we acquire the maximum value of $zp_{_k}(R)$ obtained for any ring $R$, and determine the structure of all rings within a certain range of $zp_{_k}(R)$.

\medspace

Throughout this paper, $R$ is a finite commutative ring with identity and $Z(R)$ is the set of all $zero ~divisors$ in $R$. For a set $S$, $|S|$ is the $cardinality$ of $S$. For any $x\in R$, $Ann(x)$ is the $annihilator$ of $x$ in $R$, which is the set $\{y\in R : xy=0\}$. By $(R, \mathcal{M})$, we denote the $local$ ring $R$ with its maximal ideal $\mathcal{M}$. For integers $n,q\ge2$, $\mathbb{Z}_n$ and $GF(q)$ are the ring of integers modulo $n$ and finite field of size $q$, respectively.

\section{Bounds for $zp_{_k}(R)$}
\begin{definition}
	For any ring $R$ and integer $k\ge 1$, we define
	\[Ann_{_{k}}(R)=\left\{(x_1,x_2,...,x_k)\in R^k:x_1x_2...x_k=0\right\}.\]
\end{definition}

In the following, we establish an upper bound and a lower bound for  $zp_{_k}(R)$ for any ring $R$ and integer $k\ge 2$. 

\begin{theorem} \label{t1}
	For any ring $R$ of order $n$ and integer $k\ge 2$, we have
	\begin{align*}
		(1)~zp_{{_k}}(R)&\ge \frac{\splitfrac{n^{k-1}+(n-|Z(R)|) |Ann_{{_{k-1}}}(R)|+\displaystyle\sum_{x(\ne 0)\in Z(R)}\left\{n^{k-1} \right.}{\left.-(n-|Ann(x)|)^{k-1}\right\}}}{n^k},\\
		&\\
		(2)~zp_{{_k}}(R)&\le \frac{\splitfrac{n^{k-1}+(n-|Z(R)|) |Ann_{{_{k-1}}}(R)|+\displaystyle\sum_{x(\ne 0)\in Z(R)}\left\{n^{k-1}\right.}{\left.- \big(n +(k-2)|Z(R)|-(k-1)|Ann(x)|\big)(n-|Z(R)|)^{k-2}\right\}}}{n^k}.
	\end{align*}	
\end{theorem}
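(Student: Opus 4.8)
The plan is to obtain one exact recursion for the numerator $|Ann_k(R)|$ of $zp_k(R)$ and then bound, from each side, the single sum appearing in it. Fix $(x_1,\dots,x_k)\in R^k$ and split on the first coordinate. If $x_1=0$, the product is $0$ regardless of the other coordinates, contributing $n^{k-1}$ tuples. If $x_1$ is a unit — equivalently $x_1\in R\setminus Z(R)$, since in a finite commutative ring every non-zero-divisor is a unit — then $x_1x_2\cdots x_k=0$ if and only if $x_2\cdots x_k=0$, contributing $(n-|Z(R)|)\,|Ann_{k-1}(R)|$ tuples. If $x_1=x$ with $x\in Z(R)$ and $x\ne0$, then $x_1\cdots x_k=0$ if and only if $x_2\cdots x_k\in Ann(x)$; writing $g_{k-1}(I):=\big|\{(y_1,\dots,y_{k-1})\in R^{k-1}:y_1\cdots y_{k-1}\in I\}\big|$ for an ideal $I$, this part contributes $\sum_{x(\ne0)\in Z(R)}g_{k-1}(Ann(x))$. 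Hence
\[
|Ann_k(R)|=n^{k-1}+(n-|Z(R)|)|Ann_{k-1}(R)|+\sum_{x(\ne0)\in Z(R)}g_{k-1}(Ann(x)),
\]
and, after dividing by $n^k$, the theorem reduces to two-sided estimates on $g_{k-1}(Ann(x))$ for each fixed nonzero zero divisor $x$.

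For part (1) I want a lower bound on $g_{k-1}(Ann(x))$, i.e.\ an upper bound on the number of $(y_1,\dots,y_{k-1})$ with $y_1\cdots y_{k-1}\notin Ann(x)$. The key point is that if $y_i\in Ann(x)$ for some index $i$, then $x\cdot(y_1\cdots y_{k-1})=0$, so the product lies in $Ann(x)$; contrapositively, $y_1\cdots y_{k-1}\notin Ann(x)$ forces $y_i\notin Ann(x)$ for every $i$. Thus the tuples with product outside $Ann(x)$ form a subset of $(R\setminus Ann(x))^{k-1}$, whence $g_{k-1}(Ann(x))\ge n^{k-1}-(n-|Ann(x)|)^{k-1}$, which is exactly bound (1).

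For part (2) I instead produce a lower bound on the number of tuples with $y_1\cdots y_{k-1}\notin Ann(x)$, by exhibiting two disjoint families of such tuples. Since $x\ne0$, $Ann(x)$ is a proper ideal and hence contains no unit; therefore the product of any tuple consisting entirely of units is a unit lying outside $Ann(x)$, giving $(n-|Z(R)|)^{k-1}$ such tuples. Next, for each position $j\in\{1,\dots,k-1\}$, put into slot $j$ any of the $|Z(R)|-|Ann(x)|$ elements of $Z(R)\setminus Ann(x)$ (here $Ann(x)\subseteq Z(R)$ is used) and a unit into each of the remaining $k-2$ slots; since multiplication by a unit preserves membership in $Ann(x)$, the product of such a tuple still avoids $Ann(x)$. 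Every tuple of this second kind has exactly one non-unit coordinate, so the families for distinct $j$ are pairwise disjoint and disjoint from the all-unit family, yielding at least $(n-|Z(R)|)^{k-1}+(k-1)(|Z(R)|-|Ann(x)|)(n-|Z(R)|)^{k-2}$ tuples with product outside $Ann(x)$. Factoring out $(n-|Z(R)|)^{k-2}$ and using $(n-|Z(R)|)+(k-1)(|Z(R)|-|Ann(x)|)=n+(k-2)|Z(R)|-(k-1)|Ann(x)|$ gives $g_{k-1}(Ann(x))\le n^{k-1}-\big(n+(k-2)|Z(R)|-(k-1)|Ann(x)|\big)(n-|Z(R)|)^{k-2}$, i.e.\ bound (2).

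The routine parts are the three-way split in the first paragraph and the ideal-membership observation in the second; the step requiring genuine care is the combinatorial count in the third paragraph — checking that the exhibited families really consist of tuples with product outside $Ann(x)$, that they are pairwise disjoint so that there is no double counting, that $|Z(R)\setminus Ann(x)|=|Z(R)|-|Ann(x)|$ (which uses $x\ne0$, so that $Ann(x)\subseteq Z(R)$), and that the final algebraic rearrangement matches the stated expression. I would also verify the boundary cases $k=2$ and $k-1=1$, where both bounds should collapse to the exact identity $g_1(Ann(x))=|Ann(x)|$, confirming consistency of the recursion.
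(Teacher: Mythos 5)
Your proof is correct and follows essentially the same route as the paper: the same three-way split on the first coordinate, the same lower bound via the observation that a product lying outside $Ann(x)$ forces every factor outside $Ann(x)$ (the paper's set $C_k'(x)$ is exactly the complement of $(R\setminus Ann(x))^{k-1}$), and the same upper bound via exhibiting tuples whose product avoids $Ann(x)$ (your two disjoint families together coincide, as a set, with the paper's $C_k''(x)$, merely partitioned differently). The only differences are presentational, and your verification of the degenerate case $k=2$ is a nice sanity check.
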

\begin{proof}
	We consider the following three sets:
	\vspace{-.2cm}
	\begin{align*}
		& A_k=\left\{(x_1,x_2,...,x_k)\in Ann_{_{k}}(R):x_1=0\right\},\\
		& B_k=\left\{(x_1,x_2,...,x_k)\in Ann_{_{k}}(R):x_1\notin Z(R)\right\}~\text{and}\\
		& C_k=\left\{(x_1,x_2,...,x_k)\in Ann_{_{k}}(R):x_1(\ne 0)\in Z(R)\right\}.
	\end{align*}
	Then $|Ann_{_{k}}(R)|=|A_k|+|B_k|+|C_k|$. Clearly, $|A_k|=n^{k-1}$ and $|B_k|=(n-|Z(R)|)|Ann_{_{k-1}}(R)|$. To estimate $|C_k|$, first we define $C_k(x_1)=\{(x_2,x_3,...,$ $x_k)\in R^{k-1} : x_2x_3...x_k\in Ann(x_1)\}$ for fixed $x_1(\ne0)\in Z(R)$. It is obvious that $ |C_k|=\sum\limits_{x_1(\ne0)\in Z(R)}|C_k(x_1)|$. Again, we get
	\begin{align}
		C_k^{'}(x_1)\subseteq C_k(x_1)&\subseteq C_k^{'}(x_1) \cup \left( \left(R\setminus Ann(x_1)\right)^{k-1}\setminus C_k^{''}(x_1)\right) \label{e2},
	\end{align}
	where
	\begin{align} 
		C_k^{'}(x_1)=\left[ Ann(x_1)\times R^{k-2}\right]\cup 	\left[(R\setminus Ann(x_1))\times Ann(x_1)\times R^{k-3}\right] \label{e3}\\ \cup \left[(R\setminus Ann(x_1))^2 \times Ann(x_1)\times R^{k-4}\right]\cup ...\nonumber \\ \cup \left[(R\setminus Ann(x_1))^{k-2}\times Ann(x_1)\right] \nonumber
	\end{align}
	and
	\begin{align} 
		C_k^{''}(x_1)=&\left[(Z(R)\setminus Ann(x_1))\times 	(R\setminus Z(R))^{k-2}\right]\label{e4}\\&\hspace{1.1cm}\cup \left[(R\setminus Z(R)) 
		\times (Z(R)\setminus Ann(x_1))\times (R\setminus Z(R))^{k-3}\right]\cup ...\nonumber \\ &\hspace{1.9cm}\cup \left[ (R\setminus Z(R))^{k-3} \times (Z(R)\setminus Ann(x_1))\times (R\setminus Z(R))\right]\nonumber\\ &\hspace{4.4cm}\cup \left[(R\setminus Z(R))^{k-2}\times (R\setminus Ann(x_1))\right].\nonumber
	\end{align}
	Observe that (\ref{e2}) follows from the fact that $C_k^{''}(x_1)\subseteq (R\setminus Ann(x_1))^{k-1}$ and for any $(x_2,x_3,...,x_k)\in C_k^{''}(x_1)$, we have $x_1x_2...x_k\ne 0$. Also, note that the sets on the right sides of both (\ref{e3}) and (\ref{e4}) are disjoint. Thus,
	\begin{align*}
		|C_k(x_1)|&\ge |Ann(x_1)|n^{k-2} + (n-|Ann(x_1)|)|Ann(x_1)|n^{k-3} 	+ ...\\&\hspace{6.3cm}+ (n-|Ann(x_1)|)^{k-2}|Ann(x_1)|\\
		&=n^{k-1}-(n-|Ann(x_1)|)^{k-1}\\
		\text{and}\hspace{.67cm} &\\
		|C_k(x_1)| & \le n^{k-1}-(n-|Ann(x_1)|)^{k-1} + \left[(n-|Ann(x_1)|)^{k-1}-\left\{(k-2)(|Z(R)|\right.\right.\\ &\hspace{1cm}\left.-|Ann(x_1)|)
		(n-|Z(R)|)^{k-2} + (n-|Ann(x_1)|)(n-|Z(R)|)^{k-2}\big\}\right]\\
		& = n^{k-1} - \{n+(k-2)|Z(R)|-(k-1)|Ann(x_1)|\}(n-|Z(R)|)^{k-2}.
	\end{align*}
	Therefore,

	\begin{align*}
		|C_k|&\ge\sum_{x(\ne 0)\in 	Z(R)}\left\{n^{k-1}-(n-|Ann(x)|)^{k-1}\right\},\\
		|C_k|&\le\sum_{x(\ne 0)\in Z(R)}\left[n^{k-1} - \{n+(k-2)|Z(R)|-(k-1)|Ann(x)|\}(n-|Z(R)|)^{k-2}\right],
	\end{align*}
	from which the desired inequalities follow.
\end{proof}

\begin{corollary}
	Let $R$ be a ring of order $n$, where $Ann(x)$ is a prime ideal for each $x(\ne0)\in Z(R)$. Then for any $k\ge 2$, we have
	\[zp_{_k}(R)=\frac{\splitfrac{n^{k-1}+(n-|Z(R)|) |Ann_{{_{k-1}}}(R)|+\displaystyle\sum_{x(\ne 0)\in Z(R)}\big\{n^{k-1}}{-(n-|Ann(x)|)^{k-1}\big\}}}{n^k}.\]
\end{corollary}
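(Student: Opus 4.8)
The plan is to show that under the stated hypothesis, the lower bound (1) of Theorem~\ref{t1} is attained, since the expression in the corollary is exactly that lower bound. Recall from the proof of Theorem~\ref{t1} that $zp_{_k}(R)=|Ann_{_k}(R)|/n^k$ with $|Ann_{_k}(R)|=n^{k-1}+(n-|Z(R)|)|Ann_{_{k-1}}(R)|+\sum_{x(\ne 0)\in Z(R)}|C_k(x)|$, where $C_k(x)=\{(x_2,\dots,x_k)\in R^{k-1}:x_2x_3\cdots x_k\in Ann(x)\}$, and that the inclusion $C_k^{'}(x)\subseteq C_k(x)$ together with the disjointness of the blocks in (\ref{e3}) already yields $|C_k(x)|\ge n^{k-1}-(n-|Ann(x)|)^{k-1}$. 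Hence it suffices to prove the reverse inclusion $C_k(x)\subseteq C_k^{'}(x)$ for every $x(\ne0)\in Z(R)$: that forces $|C_k(x)|=|C_k^{'}(x)|=n^{k-1}-(n-|Ann(x)|)^{k-1}$, and substituting this into the formula for $|Ann_{_k}(R)|$ produces precisely the claimed expression.

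First I would reinterpret $C_k^{'}(x)$ combinatorially. Grouping its defining blocks in (\ref{e3}) by the position of the first coordinate lying in $Ann(x)$, one sees that $C_k^{'}(x)$ is exactly the set of tuples $(x_2,\dots,x_k)\in R^{k-1}$ for which $x_i\in Ann(x)$ for at least one index $i$; its complement in $R^{k-1}$ is $\big(R\setminus Ann(x)\big)^{k-1}$, which re-confirms $|C_k^{'}(x)|=n^{k-1}-(n-|Ann(x)|)^{k-1}$ by complementation.

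The key step is then immediate from primeness. Suppose $(x_2,\dots,x_k)\in C_k(x)$, i.e.\ $x_2x_3\cdots x_k\in Ann(x)$. Since $Ann(x)$ is a prime ideal, an induction on the number of factors, using that $ab\in Ann(x)$ implies $a\in Ann(x)$ or $b\in Ann(x)$, shows that $x_i\in Ann(x)$ for some $i$; by the previous paragraph this says $(x_2,\dots,x_k)\in C_k^{'}(x)$. The opposite inclusion $C_k^{'}(x)\subseteq C_k(x)$ holds in general because $Ann(x)$ is an ideal, so $C_k(x)=C_k^{'}(x)$. Summing $|C_k(x)|=n^{k-1}-(n-|Ann(x)|)^{k-1}$ over all nonzero zero divisors and inserting into $|Ann_{_k}(R)|$ gives the corollary.

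I do not anticipate a real obstacle: the entire content is the observation that primeness of each $Ann(x)$ upgrades the one-sided inclusion $C_k^{'}(x)\subseteq C_k(x)$ used in Theorem~\ref{t1}(1) to an equality. The only points needing a little care are the combinatorial identification of $C_k^{'}(x)$ with the tuples having some coordinate in $Ann(x)$, and the short induction extending the two-factor prime condition to a product of $k-1$ factors.
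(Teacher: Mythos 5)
Your proposal is correct and follows the paper's own argument: the paper likewise deduces the corollary from the observation that primeness of $Ann(x)$ forces $C_k^{'}(x)=C_k(x)$ in (\ref{e2}), so that the lower bound of Theorem~\ref{t1}(1) is attained. You merely spell out the two details the paper leaves implicit (the identification of $C_k^{'}(x)$ with the tuples having some coordinate in $Ann(x)$, and the induction extending the prime condition to $k-1$ factors), both of which are handled correctly.
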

\begin{proof}
	The proof follows from the fact that if $Ann(x)$ is a prime ideal for a non-zero zero divisor $x$, then from (\ref{e2}) in Theorem \ref{t1}, we have $C_k^{'}(x)=C_k(x)$, whence the required result holds.
\end{proof}

The following corollary is a case of \cite[Theorem~2.1]{sarma} for a commutative ring with identity.

\begin{corollary}\label{c5}
	For any ring $R$ of order $n$, we have
	\begin{align*}
		\text{(1)}&~zp_{_3}(R)\le \frac{3n^2 -3n|Z(R)|+|Z(R)|^3 +3(n-|Z(R)|)\displaystyle\sum_{x(\neq 0)\in Z(R)}|Ann(x)|}{n^3} \text{ and }\\ \medspace\\
		\text{(2)}&~zp_{_3}(R)\ge \frac{\splitfrac{3n^2-3n|Z(R)|+|Z(R)|^2+\displaystyle\sum_{x(\neq 0)\in Z(R)}\big\{(3n-|Z(R)|)|Ann(x)|}{-|Ann(x)|^2\big\}}}{n^3}.
	\end{align*}
\end{corollary}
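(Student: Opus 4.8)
The plan is to specialize Theorem~\ref{t1} to the case $k=3$ and then unwind its single level of recursion by evaluating $|Ann_{{_2}}(R)|$ explicitly. Write $n=|R|$. Since $Ann_{{_2}}(R)=\{(x,y)\in R^{2}:xy=0\}$, we have $|Ann_{{_2}}(R)|=\sum_{x\in R}|Ann(x)|$; and because $Ann(0)=R$ while every non-zero-divisor of a finite commutative ring is a unit and hence has annihilator $\{0\}$,
\begin{align*}
|Ann_{{_2}}(R)| &= \sum_{x\in R}|Ann(x)| \\
&= n + \bigl(n-|Z(R)|\bigr) + \sum_{x(\ne 0)\in Z(R)}|Ann(x)| \\
&= 2n - |Z(R)| + \sum_{x(\ne 0)\in Z(R)}|Ann(x)|.
\end{align*}
This closed form is the only non-formal ingredient; everything after it is algebraic simplification.

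For part~(2), I would substitute $k=3$ into the lower bound of Theorem~\ref{t1}, whose numerator then reads $n^{2}+\bigl(n-|Z(R)|\bigr)|Ann_{{_2}}(R)|+\sum_{x(\ne 0)\in Z(R)}\bigl\{n^{2}-(n-|Ann(x)|)^{2}\bigr\}$. Expanding $(n-|Ann(x)|)^{2}$ and inserting the formula for $|Ann_{{_2}}(R)|$ above, the terms free of every $|Ann(x)|$ combine, via $n^{2}+(n-|Z(R)|)(2n-|Z(R)|)$, into $3n^{2}-3n|Z(R)|+|Z(R)|^{2}$, while the terms carrying $|Ann(x)|$ collect into $\sum_{x(\ne 0)\in Z(R)}\bigl\{(3n-|Z(R)|)|Ann(x)|-|Ann(x)|^{2}\bigr\}$. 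Dividing by $n^{3}$ gives exactly the stated lower bound for $zp_{_3}(R)$.

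For part~(1), I would do the same with $k=3$ in the upper bound of Theorem~\ref{t1}. Here each summand equals $n^{2}-\bigl(n+|Z(R)|-2|Ann(x)|\bigr)\bigl(n-|Z(R)|\bigr)$, which expands to $|Z(R)|^{2}+2\bigl(n-|Z(R)|\bigr)|Ann(x)|$. Since the sum runs over the $|Z(R)|-1$ non-zero zero divisors, the $|Z(R)|$-only contribution is $(|Z(R)|-1)|Z(R)|^{2}=|Z(R)|^{3}-|Z(R)|^{2}$; adding $n^{2}+\bigl(n-|Z(R)|\bigr)|Ann_{{_2}}(R)|$ and again substituting the closed form for $|Ann_{{_2}}(R)|$ cancels the $|Z(R)|^{2}$ contributions and produces $3n^{2}-3n|Z(R)|+|Z(R)|^{3}+3\bigl(n-|Z(R)|\bigr)\sum_{x(\ne 0)\in Z(R)}|Ann(x)|$, which after division by $n^{3}$ is the claimed upper bound.

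I do not anticipate a genuine obstacle here: the statement is phrased as a corollary precisely because it becomes mechanical once Theorem~\ref{t1} is in hand. The only points requiring attention are bookkeeping ones --- remembering that $0\in Z(R)$, so the sums over $x(\ne 0)\in Z(R)$ each have $|Z(R)|-1$ terms, and noticing that for $k=3$ the chains defining $C_3'(x_1)$ and $C_3''(x_1)$ in the proof of Theorem~\ref{t1} collapse to two blocks each (the factors $R^{k-3}=R^{0}$ simply disappear). With those conventions in place, both identities follow from the direct computations sketched above.
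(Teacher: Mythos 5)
Your proposal is correct: specializing Theorem~\ref{t1} to $k=3$ and closing the recursion with $|Ann_{{_2}}(R)|=2n-|Z(R)|+\sum_{x(\ne 0)\in Z(R)}|Ann(x)|$ is exactly the intended derivation (the paper itself gives no proof, merely citing it as a case of an earlier result), and your algebra for both bounds checks out. No gaps.
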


\begin{theorem}\label{t2}
	Let $R$ be a ring of order $n$. Then we have
	\begin{enumerate}
		\item $\displaystyle zp_{{_k}}(R)\ge \frac{n^{k-1}+(n-|Z(R)|) |Ann_{{_{k-1}}}(R)|}{n^k}$, and the equality holds if and only if  $R$ is a field.
		
		\vspace{.3cm}
		
		\item $\displaystyle zp_{_k}(R)\le\frac{~\splitfrac{n^{k-1}+(n-|Z(R)|) |Ann_{{_{k-1}}}(R)|+(|Z(R)|-1)\big\{n^{k-1}}{-(n-|Z(R)|)^{k-1}\big\}}}{n^k},$ and the equality holds if and only if $Z(R)^2=0$.
	\end{enumerate}
\end{theorem}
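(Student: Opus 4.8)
The plan is to carry through the set-partition $|Ann_{_k}(R)| = |A_k| + |B_k| + |C_k|$ introduced in the proof of Theorem \ref{t1}, where $|A_k| = n^{k-1}$ and $|B_k| = (n-|Z(R)|)\,|Ann_{_{k-1}}(R)|$ regardless of the structure of $R$. Thus the two inequalities reduce to the estimates $|C_k| \ge 0$ and $|C_k| \le (|Z(R)|-1)\bigl(n^{k-1} - (n-|Z(R)|)^{k-1}\bigr)$, and the equality analyses reduce to determining when each of these estimates is tight. I will also use the standard facts that in a finite commutative ring every non-zero-divisor is a unit and that such a ring is a field precisely when $Z(R) = \{0\}$, together with the decomposition $|C_k| = \sum_{x_1(\ne0)\in Z(R)} |C_k(x_1)|$, where $C_k(x_1) = \{(x_2,\dots,x_k)\in R^{k-1} : x_2\cdots x_k \in Ann(x_1)\}$.

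For part (1), the lower bound is immediate since $C_k$ is a set, so $|C_k|\ge 0$. Equality holds exactly when $C_k = \emptyset$. If $R$ is a field then $Z(R) = \{0\}$, so there is no nonzero zero divisor and $C_k = \emptyset$. For the converse I would argue contrapositively: if $R$ is not a field, pick $x(\ne0)\in Z(R)$ and $y(\ne0)\in Ann(x)$; then the $k$-tuple $(x,y,1,\dots,1)$ (using $k\ge2$) lies in $C_k$, so $C_k\ne\emptyset$.

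For the upper bound in part (2), the key step is the claim $|C_k(x_1)|\le n^{k-1} - (n-|Z(R)|)^{k-1}$ for each $x_1(\ne0)\in Z(R)$; summing over the $|Z(R)|-1$ nonzero zero divisors then delivers the asserted bound on $|C_k|$. To prove the claim I would observe that $C_k(x_1)$ is disjoint from the set of tuples $(R\setminus Z(R))^{k-1}$: if $x_2,\dots,x_k$ were all non-zero-divisors they would all be units, hence $x_2\cdots x_k$ would be a unit and could not annihilate the nonzero element $x_1$. Consequently $C_k(x_1)\subseteq R^{k-1}\setminus(R\setminus Z(R))^{k-1}$, a set of cardinality $n^{k-1}-(n-|Z(R)|)^{k-1}$.

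Finally, for the equality condition in part (2): equality in the summed bound forces $|C_k(x_1)| = n^{k-1}-(n-|Z(R)|)^{k-1}$ for every $x_1(\ne0)\in Z(R)$, and since $C_k(x_1)$ is contained in a set of that size, this means $C_k(x_1) = R^{k-1}\setminus(R\setminus Z(R))^{k-1}$ for all such $x_1$. Evaluating this at the tuple $(y,1,\dots,1)$ with $y\in Z(R)$ gives $x_1y = 0$, so $Z(R)^2 = 0$. Conversely, if $Z(R)^2 = 0$ I would first check that $Z(R)$ is closed under multiplication by $R$ (if $ac=0$ with $c\ne0$ then $(ra)c=0$), so any product $x_2\cdots x_k$ having at least one factor in $Z(R)$ again lies in $Z(R)$ and is therefore annihilated by $x_1\in Z(R)$; this yields the reverse inclusion $R^{k-1}\setminus(R\setminus Z(R))^{k-1}\subseteq C_k(x_1)$, hence equality throughout. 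I expect the only genuinely delicate points to be the two structural facts invoked — that finiteness forces non-zero-divisors to be units, and that $Z(R)^2=0$ makes $Z(R)$ closed under multiplication by $R$ — while the rest is bookkeeping with the partition; note also that these clean bounds are, as expected, weaker than those of Theorem \ref{t1}, the gap being precisely the extra terms involving $|Ann(x)|$.
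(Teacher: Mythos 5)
Your proof is correct and follows essentially the same route as the paper: the same partition $|Ann_{_k}(R)|=|A_k|+|B_k|+|C_k|$, the same containment of $C_k(x_1)$ in the set of tuples whose product is a zero divisor (you count that set as the complement of $(R\setminus Z(R))^{k-1}$, the paper counts it by a disjoint telescoping decomposition, but it is the same set of size $n^{k-1}-(n-|Z(R)|)^{k-1}$), and the same equality analysis reducing to $Ann(x_1)\supseteq Z(R)$ for all nonzero $x_1\in Z(R)$. No gaps.
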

\begin{proof}
	(1) The inequality follows from Theorem \ref{t1}(1). Again, it is clear that the equality occurs if and only if $Z(R)=\{0\}$, in other words, if and only if $R$ is a field.
	
	\medspace
	
	(2) We define $A_k, ~B_k, ~C_k$ and $C_k(x_1)$ for $x_1(\ne 0)\in Z(R)$ as in the proof of Theorem \ref{t1}. It is clear that $C_k(x_1)\subseteq\big\{(x_2,x_3,...,x_k)\in R^{k-1}:x_2x_3...x_k\in Z(R)\big\}$, and the equality holds if and only if $Ann(x_1)=Z(R)$. Again, since for any $x,y\in R$, we have $xy\in Z(R)$ if and only if $x\in Z(R)$ or $y\in Z(R)$, therefore, 
	\begin{align*}
		|C_k(x_1)|&\le \big|\big\{(x_2,x_3,...,x_k)\in R^{k-1}:x_2x_3...x_k\in Z(R)\big\}\big|\\
		&=|Z(R)|n^{k-2}+(n-|Z(R)|)|Z(R)|n^{k-3}+...+(n-|Z(R)|)^{k-2}|Z(R)|\\
		&=n^{k-1}-(n-|Z(R)|)^{k-1}.
	\end{align*}
	Thus, $|C_k|\le (|Z(R)|-1)\big\{n^{k-1}-(n-|Z(R)|)^{k-1}\big\}$, and the equality holds if and only if $Ann(x)=Z(R)$ for all $x(\ne 0)\in Z(R)$, equivalently, $Z(R)^2=0$. Hence, the equality in (2) holds if and only if $Z(R)^2=0$.
\end{proof}

\begin{remark}
	From Theorem \ref{t2}(2), we have $zp_{_2}(R)\le \frac{2(n-|Z(R)|)+|Z(R)|^2}{n^2}$ with equality if and only if  $Z(R)^2=0$, which is a case of  \cite[{Theorem~4.1}]{dolzan} for a commutative ring with identity.
\end{remark}

\begin{corollary} \label{c7}
	Let $R$ be a ring of order $n$. Then $R$ is a field if and only if
	\[zp_{_k}(R)=\frac{n^k-(n-1)^k}{n^k}.\]
\end{corollary}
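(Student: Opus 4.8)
The plan is to prove both directions by comparing $zp_{_k}(R)$ with the quantity $\frac{n^k-(n-1)^k}{n^k}$, after first recording that this quantity is a universal lower bound for $zp_{_k}$ on rings of order $n$. Indeed, every $k$-tuple $(x_1,\dots,x_k)\in R^k$ with at least one zero coordinate lies in $Ann_{_k}(R)$, and there are exactly $n^k-(n-1)^k$ such tuples (the complement being the $(n-1)^k$ tuples with all coordinates nonzero). Hence $|Ann_{_k}(R)|\ge n^k-(n-1)^k$ for every $R$ of order $n$, so $zp_{_k}(R)\ge\frac{n^k-(n-1)^k}{n^k}$, with equality exactly when $Ann_{_k}(R)$ is precisely the set of tuples having a zero coordinate, i.e. when $x_1x_2\cdots x_k\ne 0$ whenever all $x_i\ne 0$.

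For the forward implication, suppose $R$ is a field. A product of nonzero elements of a field is nonzero, so the equality condition just described holds and $zp_{_k}(R)=\frac{n^k-(n-1)^k}{n^k}$. Alternatively, one can invoke the equality case of Theorem~\ref{t2}(1): when $R$ is a field, $|Z(R)|=1$ and $|Ann_{{_{k-1}}}(R)|=n^{k-1}-(n-1)^{k-1}$ by the same counting, so $zp_{_k}(R)=\frac{n^{k-1}+(n-1)\bigl(n^{k-1}-(n-1)^{k-1}\bigr)}{n^k}=\frac{n\cdot n^{k-1}-(n-1)^k}{n^k}=\frac{n^k-(n-1)^k}{n^k}$.

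For the converse, suppose $R$ is not a field. Since $R$ has an identity and possesses a nonzero zero divisor, $1\ne 0$; pick $a,b\in R\setminus\{0\}$ with $ab=0$. Then the tuple $(a,b,1,1,\dots,1)\in R^k$ with $k-2$ trailing $1$'s (legitimate since $k\ge 2$) has all coordinates nonzero yet product $ab=0$, so it lies in $Ann_{_k}(R)$ but not among the tuples with a zero coordinate. Therefore $|Ann_{_k}(R)|>n^k-(n-1)^k$, giving $zp_{_k}(R)>\frac{n^k-(n-1)^k}{n^k}$; in particular $zp_{_k}(R)\ne\frac{n^k-(n-1)^k}{n^k}$. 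Contrapositively, the stated equality forces $R$ to be a field.

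This argument is essentially immediate; the only points needing care are checking that the exhibited counterexample tuple has the claimed shape for every $k\ge 2$ and that $1\ne 0$ in any ring with identity that has a nonzero zero divisor. I do not anticipate any real obstacle.
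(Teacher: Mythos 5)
Your proof is correct. The paper states Corollary~\ref{c7} without any proof, positioning it as a consequence of Theorem~\ref{t2}(1), whose equality case characterizes fields; your argument instead counts directly, showing $|Ann_{_k}(R)|\ge n^k-(n-1)^k$ by enumerating tuples with a zero coordinate and characterizing equality by the condition that no all-nonzero tuple has product zero. This is more self-contained and, importantly, it handles the converse more honestly than a bare appeal to Theorem~\ref{t2}(1) would: the equality case there concerns equality with the \emph{recursive} bound $\frac{n^{k-1}+(n-|Z(R)|)|Ann_{_{k-1}}(R)|}{n^k}$, not with the closed-form value $\frac{n^k-(n-1)^k}{n^k}$, so deducing that a non-field must have $zp_{_k}(R)\ne\frac{n^k-(n-1)^k}{n^k}$ requires an extra step --- exactly the one your tuple $(a,b,1,\dots,1)$ supplies, using the standard fact that a nonzero non-unit in a finite commutative ring is a zero divisor. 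Your forward direction agrees with the paper's intended route (and you verify the induction $|Ann_{_{k-1}}(R)|=n^{k-1}-(n-1)^{k-1}$ explicitly). The only caveat, which affects the statement rather than your proof, is the degenerate case $n=1$: there $zp_{_k}(R)=1=\frac{1^k-0^k}{1^k}$ although the zero ring is not a field, so one must read ``ring with identity'' as $1\ne 0$, as the paper implicitly does.
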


In the following corollary, we establish a comparison between $zp_{_k}\hspace{-.05cm}(R)$ and $zp_{_{k-1}}\hspace{-.05cm}(R)$ for any ring $R$ and integer $k\ge3$.

\begin{corollary}\label{p6}
	For any ring $R$ of order $n$ and integer $k\ge 3$, we have
	\begin{align*}
		(1)& ~zp_{_k}(R)\ge \frac{n-|Z(R)|}{n}~zp_{_{k-1}}(R) + \frac{1}{n},\text{ and the equality holds if and only if $R$ is}\\ &\text{ a field}.\\
		(2)& ~zp_{_k}(R)\le \frac{n-|Z(R)|}{n}\hspace{.05cm}zp_{_{k-1}}\hspace{-.04cm}(R) + \frac{n^{k-1}\hspace{-.07cm}+\hspace{-.07cm}(|Z(R)|-1)\hspace{-.08cm}\left\{n^{k-1}-\hspace{-.07cm}(n-|Z(R)|)^{k-1}\right\}}{n^k},\\ &\text{ and the equality holds if and only if } Z(R)^2=0.
	\end{align*}
\end{corollary}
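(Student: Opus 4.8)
The plan is to deduce Corollary \ref{p6} directly from Theorems \ref{t1} and \ref{t2} by recognizing that the quantity $n^{k-1}+(n-|Z(R)|)|Ann_{_{k-1}}(R)|$ appearing in both bounds can be rewritten in terms of $zp_{_{k-1}}(R)$. Indeed, by definition $|Ann_{_{k-1}}(R)| = n^{k-1}\,zp_{_{k-1}}(R)$, so that
\[
\frac{n^{k-1}+(n-|Z(R)|)|Ann_{_{k-1}}(R)|}{n^k} = \frac{1}{n} + \frac{(n-|Z(R)|)n^{k-1}\,zp_{_{k-1}}(R)}{n^k} = \frac{1}{n} + \frac{n-|Z(R)|}{n}\,zp_{_{k-1}}(R).
\]
This is exactly the right-hand side of inequality (1), so part (1) is immediate from Theorem \ref{t2}(1), and the equality condition (that $R$ is a field) is inherited verbatim from that theorem.

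For part (2), I would start from Theorem \ref{t2}(2), whose numerator is
\[
n^{k-1}+(n-|Z(R)|)|Ann_{_{k-1}}(R)| + (|Z(R)|-1)\bigl\{n^{k-1}-(n-|Z(R)|)^{k-1}\bigr\}.
\]
Dividing by $n^k$ and applying the same substitution $|Ann_{_{k-1}}(R)| = n^{k-1}\,zp_{_{k-1}}(R)$ to the first two terms, the bound splits as
\[
\frac{n-|Z(R)|}{n}\,zp_{_{k-1}}(R) + \frac{n^{k-1} + (|Z(R)|-1)\bigl\{n^{k-1}-(n-|Z(R)|)^{k-1}\bigr\}}{n^k},
\]
which is precisely the claimed upper bound. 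The equality condition $Z(R)^2=0$ is again carried over unchanged from Theorem \ref{t2}(2).

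The only thing requiring a small check is that the hypothesis $k\ge 3$ (rather than $k\ge 2$) causes no issue: Theorem \ref{t2} is stated for a ring of order $n$ with no restriction beyond $k\ge 2$ implicitly, and the reduction above is a pure algebraic rewriting that is valid for every $k\ge 2$; the restriction $k\ge 3$ is imposed here only so that $zp_{_{k-1}}(R)$ with $k-1\ge 2$ is itself one of the quantities under study and the comparison is meaningful. I do not anticipate any genuine obstacle — the entire proof is an exercise in substituting the definition of $zp_{_{k-1}}(R)$ into the bounds already established, and the equality characterizations transfer for free. The one place to be careful is bookkeeping with the factor $n^{k-1}$ versus $n^k$ in the denominators when pulling $zp_{_{k-1}}(R)$ out, so I would write that step out explicitly rather than leaving it to the reader.
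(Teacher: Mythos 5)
Your proposal is correct and is exactly the intended derivation: the paper states Corollary \ref{p6} without proof as an immediate consequence of Theorem \ref{t2}, obtained by substituting $|Ann_{_{k-1}}(R)| = n^{k-1}\,zp_{_{k-1}}(R)$ into both bounds, with the equality characterizations carried over unchanged. Your algebra and your remark about the denominators check out.
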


The bounds derived for $zp_{_k}(R)$ thus far are recursive. In the following, we derive an explicit form of upper bound for $zp_{_k}(R)$. Before we discuss our next result, we define the following:

\begin{definition}
	For any integer $d\ge 0$, we define the polynomial $P_{d}(x,y)$ of two real variables $x$ and $y$ of degree $d$ as
	\[P_{d}(x,y)=\sum\limits_{i=0}^{d}(-1)^{i}(i+1)\binom{d+2}{i+2} ~x^{d-i}~y^{i}.\] 
\end{definition}

\begin{lemma} \label{l2}
	For any integer $d\ge 0$ and real $x$, $P_{d}(x,x)=x^{d}$. 
\end{lemma}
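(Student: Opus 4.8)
The plan is to reduce the identity to a single binomial-coefficient sum. Every monomial appearing in $P_d(x,y)$ has total degree $d$, so putting $y=x$ collapses each term to $x^d$, and therefore
\[
P_d(x,x) = x^d\sum_{i=0}^{d}(-1)^{i}(i+1)\binom{d+2}{i+2}.
\]
Thus it suffices to prove that $S_d:=\sum_{i=0}^{d}(-1)^{i}(i+1)\binom{d+2}{i+2}=1$ for every $d\ge 0$.

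Next I would reindex the sum. Set $n=d+2$ and $j=i+2$, so that $i+1=j-1$ and $(-1)^i=(-1)^j$; then $S_d=\sum_{j=2}^{n}(-1)^{j}(j-1)\binom{n}{j}$. Since the $j=0$ term of $(-1)^j(j-1)\binom{n}{j}$ equals $-1$ and the $j=1$ term equals $0$, we may extend the range of summation down to $j=0$ at the cost of an additive constant:
\[
S_d = 1 + \sum_{j=0}^{n}(-1)^{j}(j-1)\binom{n}{j} = 1 + \sum_{j=0}^{n}(-1)^{j}j\binom{n}{j} - \sum_{j=0}^{n}(-1)^{j}\binom{n}{j}.
\]
Now I would invoke the two standard identities $\sum_{j=0}^{n}(-1)^{j}\binom{n}{j}=0$ (valid for $n\ge 1$) and $\sum_{j=0}^{n}(-1)^{j}j\binom{n}{j}=0$ (valid for $n\ge 2$, obtained from $j\binom{n}{j}=n\binom{n-1}{j-1}$ and the first identity applied with $n-1$ in place of $n$). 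Because $n=d+2\ge 2$, both sums vanish, and hence $S_d=1$, which gives $P_d(x,x)=x^d$.

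I do not expect a genuine obstacle here: the only points requiring care are the bookkeeping in the shift $j=i+2$ and verifying that the two binomial identities apply in the needed range, which they do since $n\ge 2$ (the edge case $d=0$ is also immediate, as $P_0(x,y)=\binom{2}{2}=1$). If one prefers to avoid quoting the binomial identities, the same evaluation can be carried out by viewing $\sum_{j=0}^n(-1)^j(j-1)\binom{n}{j}$ as a combination of $(x-1)^n$ and its derivative evaluated at $x=1$, or by a one-line induction on $d$ using Pascal's rule; either alternative is equally routine.
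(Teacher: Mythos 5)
Your proof is correct and follows essentially the same route as the paper: both reduce the claim to the evaluation $\sum_{i=0}^{d}(-1)^{i}(i+1)\binom{d+2}{i+2}=1$, shift the index, and appeal to the vanishing of $\sum_{j}(-1)^{j}\binom{n}{j}$ and $\sum_{j}(-1)^{j}j\binom{n}{j}$. The only difference is that the paper obtains these two vanishing sums by specializing a cited identity of Ruiz, whereas you derive them directly from the binomial theorem and the absorption identity $j\binom{n}{j}=n\binom{n-1}{j-1}$, which is more self-contained.
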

\begin{proof}
	It is sufficient to prove that $$\sum\limits_{i=0}^{d}(-1)^{i}(i+1)\binom{d+2}{i+2}= 1, ~d\ge0.$$  Since for all integers $m\ge 0$ and real $x$, we have 
	\begin{equation} \label{e5} \sum\limits_{i=0}^{m}(-1)^{i}\binom{m}{i}(x-i)^{m-j}=0,~ 1\le j\le m~~\text{(see \cite{ruiz}}),\end{equation}
	therefore, putting $x=0$ and $j=m-1$ at (\ref{e5}), we get
	\begin{align*}
		&\sum\limits_{i=1}^{m}(-1)^{i}i\binom{m}{i}=0, ~m\ge1 \\
		\implies & \sum\limits_{i=0}^{m}(-1)^{i}\binom{m}{i} + \sum\limits_{i=0}^{m-2}(-1)^{i}(i+1)\binom{m}{i+2}=1, ~m\ge2.
	\end{align*}
	Again, putting $x=0$ and $j=m$ at (\ref{e5}), we have $\sum\limits_{i=0}^{m}(-1)^{i}\binom{m}{i}=0$. Thus,  \[\sum\limits_{i=0}^{d}(-1)^{i}(i+1)\binom{d+2}{i+2}= 1, ~d\ge0,\] where $d=m-2$.
\end{proof}

\begin{theorem} \label{t4}
	For any ring $R$ of order $n$ and integer $k\ge 2$, we have 
	\[zp_{_k}(R)\le\frac{n^k-\{n+(k-1)|Z(R)|-k\}(n-|Z(R)|)^{k-1}}{n^k}.\]
	The equality holds if and only if $Z(R)^2=0$. 
\end{theorem}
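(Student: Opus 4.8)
The plan is to derive the explicit bound by unwinding the recursive bound already established, proceeding by induction on $k$, and then to read off the equality condition from that of the recursion. For the base case $k=2$, the remark following Theorem \ref{t2} gives $zp_2(R)\le \frac{2(n-|Z(R)|)+|Z(R)|^2}{n^2}$, and since $2(n-|Z(R)|)+|Z(R)|^2 = n^2-\{n+|Z(R)|-2\}(n-|Z(R)|)$, this is exactly the asserted inequality for $k=2$, with equality precisely when $Z(R)^2=0$.

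For the inductive step, abbreviate $z=|Z(R)|$ and assume the bound for $k-1$, namely $zp_{k-1}(R)\le \frac{n^{k-1}-\{n+(k-2)z-(k-1)\}(n-z)^{k-2}}{n^{k-1}}$. Corollary \ref{p6}(2) gives
\[zp_k(R)\le \frac{n-z}{n}\,zp_{k-1}(R)+\frac{n^{k-1}+(z-1)\{n^{k-1}-(n-z)^{k-1}\}}{n^k}.\]
Since the coefficient $\frac{n-z}{n}$ is nonnegative, I would substitute the inductive bound for $zp_{k-1}(R)$ and simplify the right-hand side: the terms $n^{k-1}$ and $(z-1)n^{k-1}$ combine to $zn^{k-1}$, which together with the contribution $(n-z)n^{k-1}$ coming from $\frac{n-z}{n}$ gives $n^k$, while the two $(n-z)$-power terms combine through $(n-z)\{n+(k-2)z-(k-1)\}(n-z)^{k-2}+(z-1)(n-z)^{k-1}=\{n+(k-1)z-k\}(n-z)^{k-1}$. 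This yields exactly $zp_k(R)\le \frac{n^k-\{n+(k-1)z-k\}(n-z)^{k-1}}{n^k}$, completing the induction. (Alternatively, one may fully unwind the sharper recursion of Theorem \ref{t1}(2) and invoke Lemma \ref{l2} to collapse the resulting combinatorial sum once each $|Ann(x)|$ is bounded by $|Z(R)|$; the inductive route avoids this bookkeeping.)

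For the equality statement, note that the equality condition in Corollary \ref{p6}(2), as well as in the base case, is $Z(R)^2=0$ and does not depend on $k$. Hence if $Z(R)^2=0$ then every inequality used above is an equality and the theorem's bound is attained for all $k\ge 2$. Conversely, if equality holds in the theorem for some $k\ge 3$, then in the displayed chain both the application of Corollary \ref{p6}(2) and the substitution of the inductive bound must be equalities; in particular the application of Corollary \ref{p6}(2) is an equality, which forces $Z(R)^2=0$ — here one uses that $n-|Z(R)|>0$, valid since $1\notin Z(R)$ because $R$ has an identity. For $k=2$ this is simply the equality condition in Theorem \ref{t2}(2).

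The step I expect to be the main obstacle is purely computational: confirming that the unwound recursion collapses precisely to $n^k-\{n+(k-1)|Z(R)|-k\}(n-|Z(R)|)^{k-1}$. Once the algebra in the inductive step is verified, the inequality and the equality characterization both follow formally from the results already in hand.
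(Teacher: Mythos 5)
Your proof is correct, and while it shares the paper's basic strategy of unwinding the recursion of Theorem \ref{t2}(2) by induction on $k$, the execution is genuinely different and noticeably leaner. The paper first proves the intermediate claim $|Ann_{_k}(R)|\le k(n-|Z(R)|)^{k-1}+|Z(R)|^2P_{k-2}(n,|Z(R)|)$ by an induction involving binomial-coefficient manipulations, and then identifies the closed form of $k(n-x)^{k-1}+x^2P_{k-2}(n,x)$ via a differentiation--integration argument that relies on Lemma \ref{l2} (itself resting on a cited combinatorial identity). You instead induct directly on the closed-form bound $\frac{n^k-\{n+(k-1)|Z(R)|-k\}(n-|Z(R)|)^{k-1}}{n^k}$, and the inductive step reduces to the one-line identity
\[
(n-z)\{n+(k-2)z-(k-1)\}(n-z)^{k-2}+(z-1)(n-z)^{k-1}=\{n+(k-1)z-k\}(n-z)^{k-1},
\]
which I have checked; this bypasses the polynomial $P_d$, Lemma \ref{l2}, and the calculus entirely. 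What the paper's route buys is the explicit polynomial identity $x^2P_{k-2}(n,x)=n^k-\{n+(k-1)x-k\}(n-x)^{k-1}$ as a by-product; what yours buys is brevity and transparency. You also treat the equality characterization more carefully than the printed proof does: your observation that equality in the final bound forces equality in the application of Corollary \ref{p6}(2), whose equality condition is exactly $Z(R)^2=0$, is sound (the parenthetical appeal to $n-|Z(R)|>0$ is not even needed for that direction, since a strict inequality at that step would already make the whole chain strict), and the forward direction is immediate since the equality condition $Z(R)^2=0$ is the same at every level of the recursion.
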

\begin{proof}
	From Theorem \ref{t2}(2), we have 
	\begin{multline}\label{e6}
		|Ann_{_k}(R)|\le n^{k-1}+(n-|Z(R)|) 
		|Ann_{{_{k-1}}}(R)|+(|Z(R)|-1)\\\big\{n^{k-1}-(n-|Z(R)|)^{k-1}\big\}.
	\end{multline}
	{\bf Claim:} $|Ann_{k}(R)|\le k(n-|Z(R)|)^{k-1}+|Z(R)|^2P_{k-2}(n,|Z(R)|)$.
	\medskip
	
	\noindent We prove this by induction on $k$.
	For $k=2$,
	\begin{align*}
		|Ann_{_2}(R)|&\le2(n-|Z(R)|)+|Z(R)|^2, \hspace{.1cm}\text{using (\ref{e6})}\\
		&=2(n-|Z(R)|)+|Z(R)|^2P_0(n,|Z(R)|).
	\end{align*}		
	Suppose that, for $k=l>2$ the result holds. That is,
	\[|Ann_{_l}(R)|\le l(n-|Z(R)|)^{l-1}+|Z(R)|^2P_{l-2}(n,|Z(R)|). \]
	Now,
	\begin{align*}
		&|Ann_{_{l+1}}(R)|\\
		&\le n^l+(n-|Z(R)|)|Ann_{_l}(R)|+(|Z(R)|-1)\big\{n^l-(n-|Z(R)|)^l\big\}\\
		&=n^l+(n-|Z(R)|)\big\{l(n-|Z(R)|)^{l-1}+|Z(R)|^2P_{l-2}(n,|Z(R)|)\big\}\\
		&\hspace{7.2cm}+(|Z(R)|-1)\big\{n^l-(n-|Z(R)|)^l\big\}	\\
		&=(l+1)(n-|Z(R)|)^l+|Z(R)|^2(n-|Z(R)|)\sum\limits_{i=0}^{l-2}(-1)^i(i+1)\binom{l}{i+2} n^{l-2-i} |Z(R)|^{i}\\& \hspace{7.1cm}+|Z(R)|\sum\limits_{i=1}^{l}(-1)^{i+1}\binom{l}{i}n^{l-i}|Z(R)|^{i}	\\
		&=(l+1)(n-|Z(R)|)^l+|Z(R)|^2\Bigg[\left\{\binom{l}{2}+\binom{l}{1}\right\}n^{l-1}+\sum\limits_{i=1}^{l-2}(-1)^i\bigg\{(i+1)\binom{l}{i+2}\\&\hspace{1.5cm}+i\binom{l}{i+1}+\binom{l}{i+1}\bigg\}n^{l-1-i}|Z(R)|^{i}+(-1)^{l-1}\big\{(l-1)+1\big\}|Z(R)|^{l-1}\Bigg]\\
		&=(l+1)(n-|Z(R)|)^l+|Z(R)|^2\bigg[\binom{l+1}{2}n^{l-1}+\sum\limits_{i=1}^{l-2}(-1)^i(i+1)\\
		&\hspace{5.85cm}\binom{l+1}{i+2}n^{l-1-i}|Z(R)|^{i}+(-1)^{l-1}l|Z(R)|^{l-1}\bigg]\\
		&=(l+1)(n-|Z(R)|)^l+|Z(R)|^2P_{l-1}(n,|Z(R)|).
	\end{align*}
	Therefore, the result is true for any $k\ge 2$.
	
	Next, we consider the function: $f_{_k}(x)=k(n-x)^{k-1}+x^2 P_{k-2}(n,x), ~x\in \mathbb{R}$. Then,
	\[f'_{_k}(x)=k(k-1)(x-1)(n-x)^{k-2}.\]
	Now, integrating we have
	\begin{align*}
		&\int f'_{_k}(x)dx + C = f_{_k}(x), \text{ where $C$ is an arbitrary constant}\\
		\implies & k(k-1)\int x(n-x)^{k-2}dx -k(k-1)\int (n-x)^{k-2}dx + C = k(n-x)^{k-1}\\&\hspace{9.2cm}+x^2 P_{k-2}(n,x)\\
		\implies & -\{n+(k-1)x\}(n-x)^{k-1}+C = x^2 P_{k-2}(n,x).
	\end{align*}
	At $x=n$, $C=n^2P_{k-2}(n,n)=n^{k}$ (using Lemma \ref{l2}). Thus, 
	\[x^2 P_{k-2}(n,x)=n^k-\{n+(k-1)x\}(n-x)^{k-1}.\] This implies \[f_{_k}(x)=n^k-\{n+(k-1)x-k\}(n-x)^{k-1}.\] Hence,
	\[zp_{_k}(R)\le\frac{n^k-\{n+(k-1)|Z(R)|-k\}(n-|Z(R)|)^{k-1}}{n^k}.\]
\end{proof}

\section{$zp_{_k}(R)$ for local rings}

We know that any finite commutative ring with identity can be written uniquely (up to isomorphism) as a direct product of local rings (see \cite[Theorem 8.7]{atiyah}). In addition, $zp_{_k}$ is multiplicative with respect to direct product of finite rings, which we shall prove shortly. Therefore, investigating $zp_{_k}(R)$ for a local ring is an important tool to investigate the same for a commutative ring with identity.

\begin{lemma}\label{l1}
	For any two rings $R_1$ and $R_2$, we have  $$zp_{_k}(R_1\times R_2)=zp_{_k}(R_1) zp_{_k}(R_2).$$
\end{lemma}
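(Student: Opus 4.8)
The plan is to set up a bijection between $Ann_{_k}(R_1 \times R_2)$ and $Ann_{_k}(R_1) \times Ann_{_k}(R_2)$, which immediately yields the cardinality identity $|Ann_{_k}(R_1 \times R_2)| = |Ann_{_k}(R_1)|\cdot|Ann_{_k}(R_2)|$, and then divide by $|R_1 \times R_2|^k = |R_1|^k |R_2|^k$ to recover the stated multiplicativity of $zp_{_k}$. The key structural fact I would invoke is that multiplication in the direct product $R_1 \times R_2$ is componentwise: for elements $x_j = (a_j, b_j) \in R_1 \times R_2$ with $j = 1, \dots, k$, we have $x_1 x_2 \cdots x_k = (a_1 a_2 \cdots a_k,\; b_1 b_2 \cdots b_k)$. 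Since the zero of $R_1 \times R_2$ is the pair $(0,0)$, the product $x_1 \cdots x_k$ is zero precisely when $a_1 \cdots a_k = 0$ in $R_1$ \emph{and} $b_1 \cdots b_k = 0$ in $R_2$.

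First I would make this precise by defining the map $\Phi\colon (R_1\times R_2)^k \to R_1^k \times R_2^k$ sending a $k$-tuple $\bigl((a_1,b_1), \dots, (a_k,b_k)\bigr)$ to the pair $\bigl((a_1,\dots,a_k), (b_1,\dots,b_k)\bigr)$; this is visibly a bijection (it is just a reindexing of coordinates). Next I would check that $\Phi$ restricts to a bijection between $Ann_{_k}(R_1\times R_2)$ and $Ann_{_k}(R_1)\times Ann_{_k}(R_2)$: by the componentwise-multiplication observation above, a tuple lies in $Ann_{_k}(R_1\times R_2)$ iff its image under $\Phi$ has first coordinate in $Ann_{_k}(R_1)$ and second coordinate in $Ann_{_k}(R_2)$. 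Taking cardinalities gives $|Ann_{_k}(R_1\times R_2)| = |Ann_{_k}(R_1)|\,|Ann_{_k}(R_2)|$.

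Finally I would conclude by dividing:
\[
zp_{_k}(R_1\times R_2) = \frac{|Ann_{_k}(R_1\times R_2)|}{|R_1\times R_2|^k} = \frac{|Ann_{_k}(R_1)|\,|Ann_{_k}(R_2)|}{|R_1|^k\,|R_2|^k} = zp_{_k}(R_1)\,zp_{_k}(R_2).
\]
This argument is entirely routine; there is no real obstacle. The only point requiring a word of care is the componentwise description of products of $k$ factors in a direct product, which follows by a trivial induction on $k$ from the definition of the ring structure on $R_1 \times R_2$, so I would simply state it. An immediate consequence worth recording is that $zp_{_k}$ extends multiplicatively to any finite number of factors, which together with the structure theorem cited above reduces the study of $zp_{_k}$ to the case of local rings.
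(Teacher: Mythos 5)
Your proposal is correct and follows essentially the same route as the paper: both reindex $(R_1\times R_2)^k$ as $R_1^k\times R_2^k$, use the componentwise nature of multiplication to split the condition $x_1\cdots x_k=(0,0)$ into the two factor conditions, and divide the resulting cardinality product by $|R_1|^k|R_2|^k$. No issues.
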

\begin{proof}
	We have $zp_{_k}(R_1\times R_2)=|Ann_{_k}(R_1\times R_2)|/|R_1\times R_2|^k$. So, it is sufficient to prove that $|Ann_{_k}(R_1\times R_2)|=|Ann_{_k}(R_1)||Ann_{_k}(R_2)|$. Now,
	\begin{align*}
		|Ann_{_k}(R_1\times R_2)|&=\big|\big\{\big((r_1,r_1'),(r_2,r_2'),...,(r_k,r_k')\big)\in (R_1\times R_2)^k : (r_1,r_1')(r_2,r_2')...\\& \hspace{7.4cm}(r_k,r_k')=(0,0)\big\}\big|\\
		&=\big|\big\{\big((r_1,r_1'),(r_2,r_2'),...,(r_k,r_k')\big)\in (R_1\times R_2)^k : r_1r_2...r_k=0,\\ & \hspace{7.7cm}r_1'r_2'...r_k'=0\big\}\big|.
	\end{align*}
	Since $R_1$ and $R_2$ are commutative, therefore, $(R_1\times R_2)^k\cong R_1^k\times R_2^k$, and so,
	\begin{align*}
		|Ann_{_k}(R_1\times R_2)| &=\big|\big\{\big((r_1,r_2,...,r_k),(r_1',r_2',...,r_k')\big)\in R_1^k\times R_2^k : r_1r_2...r_k=0,\\&\hspace{7.7cm}r_1'r_2'...r_k'=0\big\}\big|\\
		&=\big|\big\{(r_1,r_2,...,r_k)\in R_1^k : r_1r_2...r_k=0\big\}\\&\hspace{4cm}\times\big\{(r_1',r_2',...,r_k')\in R_2^k : r_1'r_2'...r_k'=0\big\}\big|\\
		&=|Ann_{_k}(R_1)||Ann_{_k}(R_2)|.
	\end{align*}
\end{proof}

\begin{lemma}\label{l3}
	Let $R$ be a finite local ring. Then $Z(R)$ is the unique maximal ideal of $R$. Further, $|R|=p^{nr}$ and $|Z(R)|=p^{(n-1)r}$ for some prime $p$ and positive integers $n, r$.  
\end{lemma}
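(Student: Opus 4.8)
The plan is to treat the two assertions in turn, using only standard structure theory of finite commutative rings.

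\emph{The identification $Z(R)=\mathcal{M}$.} I would first recall that in any finite commutative ring every element is a unit or a zero divisor: if $x\in R$ is not a zero divisor, then multiplication by $x$ is an injective, hence (by finiteness) surjective, self-map of $R$, so $1=xy$ for some $y$ and $x$ is a unit. Thus the set of non-units of $R$ is exactly $Z(R)$ (under the convention $0\in Z(R)$ used throughout the paper). Since $(R,\mathcal{M})$ is local, $\mathcal{M}$ is precisely the set of non-units, whence $Z(R)=\mathcal{M}$; in particular $Z(R)$ is an ideal and is the unique maximal ideal of $R$.

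\emph{The cardinality formulas.} Put $F=R/\mathcal{M}$, a finite field, so $|F|=p^{r}$ for a prime $p$ and an integer $r\ge 1$. Next I would use that $\mathcal{M}$ is nilpotent: the descending chain $\mathcal{M}\supseteq\mathcal{M}^{2}\supseteq\cdots$ stabilizes because $R$ is finite, and if $\mathcal{M}^{s}=\mathcal{M}^{s+1}=\mathcal{M}\cdot\mathcal{M}^{s}$ then Nakayama's lemma (applicable since $\mathcal{M}$ is the Jacobson radical and $\mathcal{M}^{s}$ is finitely generated) forces $\mathcal{M}^{s}=0$. So there is $t\ge 1$ with $R=\mathcal{M}^{0}\supseteq\mathcal{M}\supseteq\cdots\supseteq\mathcal{M}^{t}=0$. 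Each quotient $\mathcal{M}^{i}/\mathcal{M}^{i+1}$ is killed by $\mathcal{M}$, hence is an $F$-vector space, so $|\mathcal{M}^{i}/\mathcal{M}^{i+1}|=(p^{r})^{d_{i}}$ for some integer $d_{i}\ge 0$, with $d_{0}=1$ (as $\mathcal{M}^{0}/\mathcal{M}=F$). Multiplying over the filtration yields $|R|=p^{rn}$, where $n:=\sum_{i\ge 0}d_{i}\ge 1$, and $|\mathcal{M}|=p^{r(n-1)}$; since $Z(R)=\mathcal{M}$, these are precisely the asserted formulas $|R|=p^{nr}$ and $|Z(R)|=p^{(n-1)r}$.

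The only steps needing a word of justification are the nilpotence of $\mathcal{M}$ and the recognition of the filtration quotients as vector spaces over the residue field; both are routine for finite commutative rings (and are, for instance, consequences of the material in \cite{atiyah}), so I expect no genuine obstacle here---the lemma is essentially a repackaging of well-known facts in the notation of the present paper.
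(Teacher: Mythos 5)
Your proof is correct, but it is worth noting that the paper does not actually prove this lemma at all: its ``proof'' is a one-line citation to Raghavendran's \emph{Finite associative rings} (Theorem~2 of \cite{ragha}), which establishes the statement in the more general setting of finite (not necessarily commutative) rings with identity. Your argument is a self-contained replacement tailored to the commutative case, and both halves are sound: the unit/zero-divisor dichotomy via injectivity-implies-surjectivity of multiplication by a non-zero-divisor correctly identifies $Z(R)$ with the set of non-units, hence with $\mathcal{M}$ (and your remark about the convention $0\in Z(R)$ matches the paper's usage, e.g.\ its repeated ``$x(\ne 0)\in Z(R)$''); and the cardinality count via the nilpotence of $\mathcal{M}$ (descending chain plus Nakayama, legitimate since every ideal of a finite ring is finitely generated) and the filtration $R\supseteq\mathcal{M}\supseteq\cdots\supseteq\mathcal{M}^{t}=0$ with $F$-vector-space quotients gives exactly $|R|=p^{nr}$ and $|Z(R)|=p^{(n-1)r}$ with $n=\sum_i d_i\ge d_0=1$. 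What your route buys is independence from the external reference and transparency about which standard facts are used; what the citation buys the authors is brevity and coverage of the associative non-commutative case, which is irrelevant here since the paper works throughout with finite commutative rings with identity.
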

\begin{proof}
	See \cite[Theorem~2]{ragha}.
\end{proof}

\begin{proposition} \label{t5}
	Let $R$ be a local ring with $|R|=p^\alpha$ for some prime $p$ and positive integer $\alpha$. Then
	\[zp_{_k}(R)\le\frac{p^{k\alpha}-\left\{p^\alpha+(k-1)p^{i}-k\right\}(p^\alpha-p^i)^{k-1}}{p^{k\alpha}}\]
	for some $0\le i\le \alpha -1$, where $|Z(R)|=p^i$. The equality holds if and only if $Z(R)^2=0$.
\end{proposition}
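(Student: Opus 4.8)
The plan is to read this off from Theorem \ref{t4} after identifying $|Z(R)|$. Since $R$ is a finite local ring, Lemma \ref{l3} says that $Z(R)$ is the unique maximal ideal of $R$ and that $|R| = p^{nr}$, $|Z(R)| = p^{(n-1)r}$ for some positive integers $n,r$. Comparing with $|R| = p^{\alpha}$ forces $nr = \alpha$, hence $|Z(R)| = p^{\alpha - r}$; writing $i = \alpha - r$ and using $r \ge 1$ gives $|Z(R)| = p^{i}$ with $0 \le i \le \alpha - 1$, which is exactly the range asserted.

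Once this is in place, the inequality follows at once. Theorem \ref{t4} applies to $R$ (a finite commutative ring with identity) of order $n = p^{\alpha}$ for every $k \ge 2$, and substituting $n = p^{\alpha}$ and $|Z(R)| = p^{i}$ into
\[zp_{_k}(R) \le \frac{n^{k} - \{n + (k-1)|Z(R)| - k\}(n - |Z(R)|)^{k-1}}{n^{k}}\]
yields precisely the bound in the statement. The equality clause is inherited verbatim: Theorem \ref{t4} gives equality if and only if $Z(R)^{2} = 0$, so the same characterization holds for local $R$.

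There is no real obstacle here; the proposition is essentially a specialization of Theorem \ref{t4} to the local case, the point being that local rings are exactly the setting in which $|Z(R)|$ is forced to be a prime power. The only step that needs a word of justification is the estimate $i \le \alpha - 1$, which comes down to $r \ge 1$ --- equivalently, to the fact that the residue field $R/Z(R)$ is a proper quotient, so $|Z(R)| < |R|$. As a sanity check, when $R$ is a field one has $Z(R) = \{0\}$, $i = 0$, and the bound collapses to $zp_{_k}(R) = (n^{k} - (n-1)^{k})/n^{k}$, in agreement with Corollary \ref{c7}.
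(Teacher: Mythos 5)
Your proof is correct and follows the paper's own route: the paper likewise derives this proposition directly from Theorem \ref{t4}, with Lemma \ref{l3} supplying $|Z(R)|=p^i$ for some $0\le i\le\alpha-1$. Your additional justification of the exponent range is a welcome but minor elaboration of the same argument.
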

\begin{proof}
	It is clear from Theorem \ref{t4}.
\end{proof}

\begin{lemma} \label{l4}
	Let $p$ be a prime and $\alpha$ be a positive integer. If $0\le i<j<\alpha$ for integers $i$ and $j$, then  \[p^{k\alpha}-\left\{p^\alpha+(k-1)p^{i}-k\right\}(p^\alpha-p^i)^{k-1}<p^{k\alpha}-\left\{p^\alpha+(k-1)p^{j}-k\right\}(p^\alpha-p^j)^{k-1}.\]
\end{lemma}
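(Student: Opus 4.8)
The plan is to read both sides of the claimed inequality as values of the single-variable polynomial that already surfaced in the proof of Theorem~\ref{t4}. Setting $n=p^{\alpha}$ and writing, as there, $f_{_k}(x)=n^k-\{n+(k-1)x-k\}(n-x)^{k-1}$, the inequality to be proved is precisely $f_{_k}(p^{i})<f_{_k}(p^{j})$. So it suffices to show that $f_{_k}$ is strictly increasing on the set of arguments that actually occur, and for this I would simply reuse the derivative computed in that same proof, namely $f'_{_k}(x)=k(k-1)(x-1)(n-x)^{k-2}$.

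First I would record the location of the arguments: from $0\le i<j<\alpha$ we get $1=p^{0}\le p^{i}<p^{j}<p^{\alpha}=n$, so both $p^{i}$ and $p^{j}$ lie in $[1,n)$. On the open interval $(1,n)$ every factor of $f'_{_k}$ is positive --- $k(k-1)>0$ since $k\ge2$, $x-1>0$ since $x>1$, and $(n-x)^{k-2}>0$ since $0<n-x$ (this last factor being just $1$ when $k=2$) --- hence $f'_{_k}>0$ there. Since $f_{_k}$ is a polynomial, hence continuous on $[1,n)$, with strictly positive derivative on $(1,n)$, it is strictly increasing on $[1,n)$. Applying this to $p^{i}<p^{j}$ yields $f_{_k}(p^{i})<f_{_k}(p^{j})$, which is exactly the assertion once $f_{_k}$ is written out.

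There is essentially no real obstacle here; the only point needing a word is the boundary case $i=0$, where the left argument $p^{i}=1$ is precisely the zero of $f'_{_k}$. This is harmless: strict monotonicity on $[1,n)$ is not spoiled by the derivative vanishing at the single endpoint $x=1$, since $f_{_k}(p^{j})-f_{_k}(1)=\int_{1}^{p^{j}}f'_{_k}(x)\,dx>0$ by positivity of the integrand on $(1,p^{j})$. Thus the lemma falls out directly from the analysis already carried out for Theorem~\ref{t4}, with no further computation required.
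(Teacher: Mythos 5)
Your proof is correct and follows essentially the same route as the paper, which likewise deduces the lemma from the strict monotonicity of $f_{_k}(x)=n^k-\{n+(k-1)x-k\}(n-x)^{k-1}$ on $(1,n)$ via the derivative $f'_{_k}(x)=k(k-1)(x-1)(n-x)^{k-2}$ computed in Theorem~\ref{t4}. Your explicit treatment of the endpoint case $i=0$ (where $p^{i}=1$ is the lone zero of $f'_{_k}$) is a small point of added care that the paper's one-line proof glosses over, but it is not a different argument.
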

\begin{proof}
	It follows from the fact that the function $f_{_k}(x)=n^k-\{n+(k-1)x-k\}(n-x)^{k-1}$ in Theorem \ref{t4} is strictly increasing for real $x\in(1,n)$. Therefore, $f_{_k}(p^i)<f_{_k}(p^j)$ for $0\le i<j<\alpha$.
\end{proof}

\begin{definition}
	For a commutative ring $R$ with identity and a unitary $R$-module $M$, $R*M$ is called the {\it Nagata's idealization} of $M$ in $R$, which is the ring $R\times M$, where for any $(r_1,m_1),(r_2,m_2)\in R\times M$, $(r_1,m_1)+(r_2,m_2)=(r_1+r_2,m_1+m_2) \text{ and }(r_1,m_1)\cdot(r_2,m_2)=(r_1r_2,r_1m_2+r_2m_1).$
\end{definition}

Note that, (1,0) is the identity of $R*M$ and $0*M$ is an ideal of $R*M$ of nilpotency index 2. Also, $R*M$ is local if and only if $R$ is local.

\begin{lemma}[{\cite[Theorem 25.3]{huckaba}}]\label{l5}
	For a commutative ring $R$ with identity and unitary $R$-module $M$, $Z(R*M)=\left(Z(R)\cup Z(M)\right)\times M$, where $Z(M)=\{r\in R : rm = 0 \text{ for some }m(\ne0)\in M\}$.
\end{lemma}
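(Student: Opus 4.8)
The statement to prove is Lemma~\ref{l5}: for a commutative ring $R$ with identity and unitary $R$-module $M$, $Z(R*M)=(Z(R)\cup Z(M))\times M$, where $Z(M)=\{r\in R : rm=0 \text{ for some } m(\ne0)\in M\}$. This is cited as \cite[Theorem 25.3]{huckaba}, so presumably the ``proof'' will just be a reference, but let me sketch how I would actually prove it.

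The plan is to prove the two inclusions directly from the multiplication rule in the idealization. First I would show $(Z(R)\cup Z(M))\times M \subseteq Z(R*M)$. Take $(r,m)$ with $r\in Z(R)\cup Z(M)$. If $r\in Z(R)$, pick a nonzero $s\in R$ with $rs=0$; then $(r,m)\cdot(s,-sm) = (rs, rsm + s m \cdot 0)$... wait, I need to be careful with the idealization product: $(r_1,m_1)(r_2,m_2)=(r_1r_2, r_1m_2+r_2m_1)$. So $(r,m)(s,m') = (rs, rm'+sm)$. I want this to be $(0,0)$ with $(s,m')\ne(0,0)$. With $rs=0$, I need $rm'+sm=0$, i.e. $m'$ chosen so that $rm'=-sm$; since $rs=0$, taking $m' = -m$ gives $rm'+sm = -rm+sm = (s-r)m$, not obviously zero. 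Better: if $r\in Z(R)$ with $rs=0$, consider $(0,sm)$ if $sm\ne0$ — but $sm$ could be zero. Hmm, the cleanest: $(r,m)\cdot(0,s') = (0, rs')$ for any $s'\in M$; if there exists nonzero $s'\in M$ with $rs'=0$ we are done — but that is exactly saying $r\in Z(M)\cup\{$annihilator stuff$\}$. Actually if $M\ne 0$: either $rM=0$ for some... let me just note that $(r,m)(0,m') = (0, rm')$, so if $r$ is a zero divisor on $M$ (i.e. $r\in Z(M)$) then some nonzero $(0,m')$ kills $(r,m)$. And if $r\in Z(R)$ with annihilator containing nonzero $s$, then actually I should use that $0*M$ has nilpotency index $2$: for any $m\in M$, $(0,m)^2 = 0$, and more usefully $(r,m)\cdot(s, m'')$ — let me instead recall $R*M$ has the ideal $0*M$, and the key trick is that the total quotient behavior matches. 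Honestly the standard argument: $(r,m)$ is a unit in $R*M$ iff $r$ is a unit in $R$ (since $0*M$ is contained in the Jacobson radical — it's a nil ideal of index $2$). And in a finite (hence artinian) commutative ring, an element is either a unit or a zero divisor. So $(r,m)\in Z(R*M)$ iff $(r,m)$ is not a unit iff $r$ is not a unit in $R$. But that would say $Z(R*M) = (R\setminus U(R))\times M$ where $U(R)$ is the unit group — and $R\setminus U(R) = Z(R)\cup\{$nonzerodivisors that aren't units$\}$, which in the finite case is just $Z(R)$ ... unless $R$ isn't finite. The lemma is stated for general $R$, so the $Z(M)$ term genuinely matters when $R$ has nonzerodivisors that nonetheless kill module elements.

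So the honest proof for general $R$: For the inclusion $\supseteq$, if $r\in Z(M)$ pick nonzero $m'\in M$ with $rm'=0$; then $(r,m)(0,m')=(0,rm')=(0,0)$ and $(0,m')\ne(0,0)$, so $(r,m)\in Z(R*M)$. If $r\in Z(R)$ pick nonzero $s\in R$ with $rs=0$; then $(r,m)(s,-m) = (rs, -rm+sm) = (0,(s-r)m)$ — not obviously zero, so instead compute $(r,m)\cdot(s,0)=(rs,sm)=(0,sm)$; if $sm=0$ this is $(0,0)$ with $(s,0)\ne(0,0)$, done; if $sm\ne0$, then $sm$ is a nonzero element of $M$ killed by... hmm. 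Alternative: $(r,m)\cdot(s, m'')=(0, rm''+sm)$; choose $m''$ with $rm'' = -sm$ if possible. Not always possible. Cleanest fix: the element $(s, 0)\cdot(0,m)^{?}$... Let me just use: $(r,m)$ annihilates $(0, m')$ iff $rm'=0$, and annihilates $(s,m'')$ more generally; if $rs=0$ with $s\ne0$, then $(r,m)(s,m'') = (0, rm''+sm)$; pick $m''=0$ and we get $(0,sm)$. Now $(r,m)\cdot(0,sm)=(0, r\cdot sm) = (0, (rs)m)=(0,0)$! So if $sm\ne 0$, the nonzero element $(0,sm)$ is annihilated by $(r,m)$, and if $sm=0$ then the nonzero element $(s,0)$ is annihilated by $(r,m)$. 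Either way $(r,m)\in Z(R*M)$. That handles $\supseteq$.

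For the reverse inclusion $\subseteq$: suppose $(r,m)\in Z(R*M)$, so there is $(s,m'')\ne(0,0)$ with $(r,m)(s,m'')=(rs, rm''+sm)=(0,0)$, i.e. $rs=0$ and $rm''+sm=0$. The main case analysis is on whether $s=0$ or $s\ne0$. If $s\ne0$, then $rs=0$ shows $r\in Z(R)$, hence $(r,m)\in (Z(R)\cup Z(M))\times M$. If $s=0$, then $m''\ne0$ (since $(s,m'')\ne(0,0)$) and the second equation reads $rm''=0$ with $m''\ne0$, so $r\in Z(M)$, again giving $(r,m)\in(Z(R)\cup Z(M))\times M$. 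That completes the proof. The main obstacle, such as it is, is just getting the $\supseteq$ direction right when $rs=0$ but $s$ doesn't obviously annihilate $m$ — resolved by the two-step annihilation $(r,m)\mapsto(0,sm)\mapsto 0$ above; everything else is a direct unwinding of the idealization multiplication. Given the paper cites Huckaba, I would simply write ``See \cite[Theorem 25.3]{huckaba}'' but the above is the self-contained argument.

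\begin{proof}
This is \cite[Theorem~25.3]{huckaba}; for completeness we recall the short argument. Write elements of $R*M$ as pairs $(r,m)$ with product $(r_1,m_1)(r_2,m_2)=(r_1r_2,\,r_1m_2+r_2m_1)$.

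($\supseteq$) Let $r\in Z(R)\cup Z(M)$ and $m\in M$. If $r\in Z(M)$, choose $m'(\ne0)\in M$ with $rm'=0$; then $(r,m)(0,m')=(0,rm')=(0,0)$ with $(0,m')\ne(0,0)$, so $(r,m)\in Z(R*M)$. If $r\in Z(R)$, choose $s(\ne0)\in R$ with $rs=0$. If $sm=0$, then $(r,m)(s,0)=(rs,sm)=(0,0)$ with $(s,0)\ne(0,0)$. If $sm\ne0$, then $(0,sm)\ne(0,0)$ and $(r,m)(0,sm)=(0,r(sm))=(0,(rs)m)=(0,0)$. In every case $(r,m)\in Z(R*M)$, so $(Z(R)\cup Z(M))\times M\subseteq Z(R*M)$.

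($\subseteq$) Let $(r,m)\in Z(R*M)$, and pick $(s,m'')\ne(0,0)$ with $(r,m)(s,m'')=(0,0)$, i.e.\ $rs=0$ and $rm''+sm=0$. If $s\ne0$, then $rs=0$ gives $r\in Z(R)$. If $s=0$, then $m''\ne0$ and $rm''=rm''+sm=0$ gives $r\in Z(M)$. In either case $r\in Z(R)\cup Z(M)$, hence $(r,m)\in(Z(R)\cup Z(M))\times M$.
\end{proof}
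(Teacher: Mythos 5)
Your final proof is correct. The paper itself gives no argument for this lemma at all: it is imported verbatim as Theorem~25.3 of Huckaba's book, with the citation serving as the entire justification. Your self-contained verification is a valid substitute. Both inclusions check out: for $(Z(R)\cup Z(M))\times M\subseteq Z(R*M)$, the case $r\in Z(M)$ is immediate via $(r,m)(0,m')=(0,rm')$, and your two-case handling of $r\in Z(R)$ (using $(s,0)$ when $sm=0$ and the element $(0,sm)$ when $sm\ne0$) correctly circumvents the fact that an annihilator of $r$ in $R$ need not annihilate $m$; for the reverse inclusion, splitting on whether the annihilating element has nonzero first coordinate cleanly lands $r$ in $Z(R)$ or $Z(M)$. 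The only stylistic remark is that the lengthy exploratory discussion preceding the proof environment (including the detour through units and the finite/artinian case, which is irrelevant since the lemma is stated for arbitrary $R$) should be cut; the closed-form proof at the end is all that is needed, and it is exactly the kind of direct computation one would expect Huckaba's proof to be.
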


\begin{example}\label{l6}
	For prime $p$ and positive integer $\alpha$, the ring $R=\mathbb{Z}_p*(\mathbb{Z}_p)^{\alpha -1}$ is local with $Z(R)=0*(\mathbb{Z}_p)^{\alpha -1}$. Therefore, by Proposition \ref{t5},
	\[zp_{_k}(R)=\frac{p^{\alpha-1}\left\{p^k-(k+p-1)(p-1)^{k-1}\right\}+k(p-1)^{k-1}}{p^{k+\alpha-1}}.\]
\end{example}

\begin{proposition}[{\cite[Proposition~2.3]{sarma}}] \label{l7}
	Let $(R,\mathcal{M})$ be a local ring with $|R|=p^\alpha$ and $|\mathcal{M}|=p^{\alpha -1}$ for some prime $p$ and positive integer $\alpha$. Then $\mathcal{M}^2=0$ if and only if $R\cong \mathbb{Z}_p*(\mathbb{Z}_p)^{\alpha -1}$ or $R\cong\mathbb{Z}_{p^2}*(\mathbb{Z}_{p})^{\alpha -2}.$
\end{proposition}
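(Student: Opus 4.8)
The plan is to treat the two implications separately; the forward direction (``only if'') carries the real content.

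For the ``if'' direction I would simply verify that each candidate ring has the stated properties. By the remark preceding Lemma~\ref{l5}, both $\mathbb{Z}_p*(\mathbb{Z}_p)^{\alpha-1}$ and $\mathbb{Z}_{p^2}*(\mathbb{Z}_p)^{\alpha-2}$ are local (since $\mathbb{Z}_p$ and $\mathbb{Z}_{p^2}$ are), and each has order $p^\alpha$. Using Lemma~\ref{l5}, the maximal ideal of $\mathbb{Z}_p*(\mathbb{Z}_p)^{\alpha-1}$ is $0*(\mathbb{Z}_p)^{\alpha-1}$ (already recorded in Example~\ref{l6}), of order $p^{\alpha-1}$; and for $\mathbb{Z}_{p^2}*(\mathbb{Z}_p)^{\alpha-2}$ one checks that $Z(\mathbb{Z}_{p^2})$ and the zero-divisor set of the $\mathbb{Z}_{p^2}$-module $(\mathbb{Z}_p)^{\alpha-2}$ both equal $p\mathbb{Z}_{p^2}$, so its maximal ideal is $p\mathbb{Z}_{p^2}\times(\mathbb{Z}_p)^{\alpha-2}$, again of order $p^{\alpha-1}$. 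In both cases a one-line computation — using $p^2=0$ in $\mathbb{Z}_{p^2}$ and $p\cdot(\mathbb{Z}_p)^{\alpha-2}=0$ in the second case — shows that the maximal ideal squares to zero.

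For the ``only if'' direction, assume $(R,\mathcal{M})$ is local with $|R|=p^\alpha$, $|\mathcal{M}|=p^{\alpha-1}$ and $\mathcal{M}^2=0$. First I would note that $R/\mathcal{M}$ is a field of order $p$, hence $R/\mathcal{M}\cong\mathbb{Z}_p$, and that $\mathcal{M}^2=0$ makes $\mathcal{M}$ a vector space over $R/\mathcal{M}\cong GF(p)$, necessarily of dimension $\alpha-1$. Next I would pin down the characteristic of $R$: since $R/\mathcal{M}$ has characteristic $p$ we have $p\cdot 1\in\mathcal{M}$, so $p^2\cdot 1=(p\cdot 1)(p\cdot 1)\in\mathcal{M}^2=0$; thus the characteristic is $p$ or $p^2$, and I would split into these two cases. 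If it is $p$, the prime subring of $R$ is a copy of $\mathbb{Z}_p$ meeting $\mathcal{M}$ trivially, so a cardinality count gives $R=\mathbb{Z}_p\cdot 1\oplus\mathcal{M}$ as additive groups; since $\mathcal{M}^2=0$, the identity $(a\cdot 1+m)(b\cdot 1+n)=ab\cdot 1+(an+bm)$ shows this is exactly the idealization multiplication, so $R\cong\mathbb{Z}_p*\mathcal{M}\cong\mathbb{Z}_p*(\mathbb{Z}_p)^{\alpha-1}$. If it is $p^2$, the prime subring is a copy of $\mathbb{Z}_{p^2}$ and $p\cdot 1$ is a nonzero vector of $\mathcal{M}$; I would extend $\{p\cdot 1\}$ to a $GF(p)$-basis of $\mathcal{M}$ and let $M'$ be the span of the remaining basis vectors, so that $\mathcal{M}=GF(p)(p\cdot 1)\oplus M'$ with $|M'|=p^{\alpha-2}$. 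Then $\mathbb{Z}_{p^2}\cdot 1\cap M'=0$, a cardinality count gives $R=\mathbb{Z}_{p^2}\cdot 1\oplus M'$ as additive groups, and checking products — using $\mathcal{M}^2=0$ together with $pM'=0$, so that $\mathbb{Z}_{p^2}$ acts on $M'$ through the quotient $\mathbb{Z}_{p^2}\to GF(p)$ — shows $R\cong\mathbb{Z}_{p^2}*M'\cong\mathbb{Z}_{p^2}*(\mathbb{Z}_p)^{\alpha-2}$.

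I expect the main obstacle to be the characteristic-$p^2$ case: one must observe that $p\cdot 1$ already occupies one dimension of the $GF(p)$-space $\mathcal{M}$, choose a complement $M'$ that is simultaneously disjoint from the prime subring $\mathbb{Z}_{p^2}\cdot 1$, and then verify that the multiplication of $R$, restricted to $\mathbb{Z}_{p^2}\cdot 1$ and $M'$, reproduces the Nagata idealization multiplication — in particular that $(j\cdot 1)\cdot m'$ lies in $M'$ and depends only on $j$ modulo $p$. Everything else is routine bookkeeping with orders of subgroups; this argument also appears as \cite[Proposition~2.3]{sarma}.
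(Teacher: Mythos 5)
Your argument is correct, but there is nothing in this paper to compare it against: the statement is imported verbatim as \cite[Proposition~2.3]{sarma} and the paper supplies no proof of its own, only the citation. Your proof is a sound self-contained derivation of that cited result --- the reduction to characteristic $p$ or $p^2$ via $p^2\cdot 1=(p\cdot1)^2\in\mathcal{M}^2=0$, the additive splittings $R=\mathbb{Z}_p\cdot1\oplus\mathcal{M}$ and $R=\mathbb{Z}_{p^2}\cdot1\oplus M'$, and the verification that $\mathcal{M}^2=0$ forces the multiplication to be exactly the Nagata idealization product are all the standard ingredients, and the one genuinely delicate point (choosing the complement $M'$ of $GF(p)(p\cdot1)$ inside $\mathcal{M}$ so that it also meets $\mathbb{Z}_{p^2}\cdot1$ trivially) is handled correctly.
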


\begin{theorem} \label{p1}
	Let $(R, \mathcal{M})$ be a local ring with $|R|=p^\alpha$ for some prime $p$ and positive integer $\alpha$. Then 
	\[zp_{_k}(R)\le\frac{p^{\alpha-1}\left\{p^k-(k+p-1)(p-1)^{k-1}\right\}+k(p-1)^{k-1}}{p^{k+\alpha-1}}.\]
	The equality holds if and only if $R\cong \mathbb{Z}_p * (\mathbb{Z}_p)^{\alpha-1}$ or $R\cong \mathbb{Z}_{p^2} * (\mathbb{Z}_p)^{\alpha-2}$.
\end{theorem}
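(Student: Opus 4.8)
The plan is to combine Proposition~\ref{t5}, Lemma~\ref{l4} and Proposition~\ref{l7}. Since $(R,\mathcal{M})$ is local, Lemma~\ref{l3} gives $Z(R)=\mathcal{M}$ and $|Z(R)|=p^{i}$ for some integer $i$ with $0\le i\le\alpha-1$. First I would invoke Proposition~\ref{t5} to obtain
\[
zp_{_k}(R)\le\frac{p^{k\alpha}-\{p^\alpha+(k-1)p^{i}-k\}(p^\alpha-p^{i})^{k-1}}{p^{k\alpha}}=:g(i),
\]
with equality precisely when $\mathcal{M}^2=Z(R)^2=0$. The key observation is then that $g(i)$ is largest when $i$ is as large as possible: by Lemma~\ref{l4} the map $i\mapsto g(i)$ is strictly increasing on $\{0,1,\dots,\alpha-1\}$, so $g(i)\le g(\alpha-1)$, with equality if and only if $i=\alpha-1$.

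It remains to identify $g(\alpha-1)$ with the claimed bound. This is a routine simplification: writing $p^\alpha-p^{\alpha-1}=p^{\alpha-1}(p-1)$ and expanding the exponents, one checks
\[
g(\alpha-1)=\frac{p^{\alpha-1}\left\{p^k-(k+p-1)(p-1)^{k-1}\right\}+k(p-1)^{k-1}}{p^{k+\alpha-1}},
\]
which is exactly the value already computed in Example~\ref{l6} for $\mathbb{Z}_p*(\mathbb{Z}_p)^{\alpha-1}$. Chaining $zp_{_k}(R)\le g(i)\le g(\alpha-1)$ then proves the inequality.

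For the equality statement, the same chain shows that $zp_{_k}(R)=g(\alpha-1)$ forces equality in both steps, i.e.\ $Z(R)^2=0$ and $i=\alpha-1$, equivalently $\mathcal{M}^2=0$ and $|\mathcal{M}|=p^{\alpha-1}$. Proposition~\ref{l7} then says this holds exactly when $R\cong\mathbb{Z}_p*(\mathbb{Z}_p)^{\alpha-1}$ or $R\cong\mathbb{Z}_{p^2}*(\mathbb{Z}_p)^{\alpha-2}$. Conversely, both of these rings satisfy $\mathcal{M}^2=0$ and $|\mathcal{M}|=p^{\alpha-1}$ (again by Proposition~\ref{l7}), hence attain the bound.

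There is no genuinely hard step here; the points to watch are the degenerate case $\alpha=1$ (where $i=0=\alpha-1$, $R$ is a field, and the formula collapses to $p^k-(p-1)^k$ over $p^k$, consistent with Corollary~\ref{c7}), and ensuring the strict monotonicity of Lemma~\ref{l4} is applied over the full range including $i=0$, which is precisely what its statement provides. The most error-prone part is the elementary exponent bookkeeping in reducing $g(\alpha-1)$ to the displayed closed form.
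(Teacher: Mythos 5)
Your proposal is correct and follows essentially the same route as the paper: the inequality is obtained by chaining Proposition~\ref{t5} with the monotonicity in Lemma~\ref{l4}, and the equality case is reduced to $|\mathcal{M}|=p^{\alpha-1}$ with $\mathcal{M}^2=0$ and then settled by Proposition~\ref{l7}. You simply spell out the exponent bookkeeping and the degenerate case $\alpha=1$ that the paper leaves implicit.
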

\begin{proof}
	The inequality follows from Proposition \ref{t5} and Lemma \ref{l4}. The equality holds if and only if $|\mathcal{M}|=p^{\alpha-1}$ with $\mathcal{M}^2=0$, or, in other words, if and only if $R\cong \mathbb{Z}_p * (\mathbb{Z}_p)^{\alpha-1}$ or $R\cong \mathbb{Z}_{p^2} * (\mathbb{Z}_p)^{\alpha-2}$.
\end{proof}

This leads to the next result.

\begin{theorem} \label{t6}
	Let $R$ be a ring such that $R\cong R_1\times R_2\times ...\times R_t$, where each $R_i$ is local ring with identity of order $p^{\alpha_i}_i$, $p_i'$s are distinct primes and $\alpha_i>0$ for $1\le i\le t$. Then
	\[zp_{_k}(R)\le \prod_{i=1}^{t}\frac{p_i^{\alpha_i-1}\left\{p_i^k-(k+p_i-1)(p_i-1)^{k-1}\right\}+k(p_i-1)^{k-1}}{p_i^{k+\alpha_i-1}},\] and the equality holds if and only if for each $i$, $R_i\cong \mathbb{Z}_{p_i}*(\mathbb{Z}_{p_i})^{\alpha_i -1}$ or $R\cong\mathbb{Z}_{{p_i}^2}*(\mathbb{Z}_{p_i})^{\alpha_i -2}.$ 
\end{theorem}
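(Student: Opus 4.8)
The plan is to deduce this theorem directly from the multiplicativity of $zp_{_k}$ (Lemma~\ref{l1}) together with the local bound of Theorem~\ref{p1}. First I would promote Lemma~\ref{l1} from two factors to $t$ factors by an easy induction on $t$: the base case $t=1$ is immediate, and for the inductive step one writes $R\cong(R_1\times\cdots\times R_{t-1})\times R_t$ and applies Lemma~\ref{l1}, which is legitimate since $R_1\times\cdots\times R_{t-1}$ is again a finite commutative ring with identity. This yields
\[zp_{_k}(R)=\prod_{i=1}^{t}zp_{_k}(R_i).\]

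Next, since each $R_i$ is a local ring of order $p_i^{\alpha_i}$, Theorem~\ref{p1} applies to every factor and gives
\[zp_{_k}(R_i)\le b_i:=\frac{p_i^{\alpha_i-1}\left\{p_i^k-(k+p_i-1)(p_i-1)^{k-1}\right\}+k(p_i-1)^{k-1}}{p_i^{k+\alpha_i-1}}.\]
At this point I would record two positivity facts: first, $zp_{_k}(R_i)\ge 1/|R_i|^{k}>0$ because $(0,0,\ldots,0)\in Ann_{_k}(R_i)$; second, $b_i=zp_{_k}(\mathbb{Z}_{p_i}*(\mathbb{Z}_{p_i})^{\alpha_i-1})>0$ by Example~\ref{l6}. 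Multiplying the inequalities $0<zp_{_k}(R_i)\le b_i$ over $i=1,\ldots,t$ then gives $zp_{_k}(R)=\prod_{i=1}^{t}zp_{_k}(R_i)\le\prod_{i=1}^{t}b_i$, which is the asserted bound.

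For the equality statement, the positivity of the $b_i$ is exactly what is needed: if $\prod_{i}zp_{_k}(R_i)=\prod_{i}b_i$ with $0<zp_{_k}(R_i)\le b_i$ for each $i$, then necessarily $zp_{_k}(R_i)=b_i$ for every $i$; conversely, termwise equality propagates to the product. By the equality clause of Theorem~\ref{p1}, $zp_{_k}(R_i)=b_i$ holds precisely when $R_i\cong\mathbb{Z}_{p_i}*(\mathbb{Z}_{p_i})^{\alpha_i-1}$ or $R_i\cong\mathbb{Z}_{{p_i}^2}*(\mathbb{Z}_{p_i})^{\alpha_i-2}$ (the second alternative being vacuous, and $R_i$ forced to be the field of order $p_i$, when $\alpha_i=1$), and combining this equivalence over all $i$ finishes the proof. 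There is no real obstacle here, the argument being essentially an assembly of Lemma~\ref{l1} and Theorem~\ref{p1}; the only point requiring a moment of care is the positivity of each $b_i$, since it is this that licenses passing from equality of the products back to equality in each factor.
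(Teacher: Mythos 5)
Your argument is correct and is exactly the route the paper takes: its proof of Theorem~\ref{t6} is the one-line remark that the result is clear from Lemma~\ref{l1} and Theorem~\ref{p1}. You have simply filled in the details the authors leave implicit (the induction extending Lemma~\ref{l1} to $t$ factors and the positivity needed to pass between equality of the products and termwise equality), so no further comment is needed.
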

\begin{proof}
	The proof is clear from Lemma \ref{l1} and Theorem \ref{p1}.
\end{proof}

\begin{remark}
	Note that, the inequality in Theorem \ref{p1}, however, need not hold for all {\it $p$-rings}, rings whose order is power of the prime $p$. For example, $R=\mathbb{Z}_2\times \mathbb{Z}_2$.
\end{remark}

In the following, we establish the largest possible upper bound for $zp_{_k}(R)$ for any $p$-ring $R$.
\begin{proposition}\label{c6}
	For any $p$-ring $R$, where $p$ is a prime, we have
	\[zp_{_k}(R)\le\frac{p^k-(p-1)^k}{p^k}.\]
	The equality holds if and only if $R\cong \mathbb{Z}_p$.
\end{proposition}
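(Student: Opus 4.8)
The plan is to reduce the claim to the local case and then compare local contributions. First I would write $R\cong R_1\times\cdots\times R_t$ as a product of local rings (by \cite[Theorem 8.7]{atiyah}); since $R$ is a $p$-ring, each $R_i$ has order a power of the \emph{same} prime $p$, say $|R_i|=p^{\alpha_i}$ with $\sum_i\alpha_i=\alpha$ where $|R|=p^\alpha$. By Lemma \ref{l1}, $zp_{_k}(R)=\prod_{i=1}^t zp_{_k}(R_i)$, so it suffices to show that for a local ring $R_i$ of order $p^{\alpha_i}$ we have $zp_{_k}(R_i)\le\frac{p^k-(p-1)^k}{p^k}$, with equality forcing $R_i\cong\mathbb{Z}_p$, and then to argue that having more than one factor, or a single factor of order $p^{\alpha}$ with $\alpha\ge 2$, strictly decreases the product.

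For the local case, I would combine Proposition \ref{t5} and Lemma \ref{l4}: if $R_i$ is local of order $p^{\alpha_i}$ with $|Z(R_i)|=p^{j}$, $0\le j\le\alpha_i-1$, then $zp_{_k}(R_i)\le f_{_k}(p^{j})/p^{k\alpha_i}$ where $f_{_k}(x)=n^k-\{n+(k-1)x-k\}(n-x)^{k-1}$ with $n=p^{\alpha_i}$, and by Lemma \ref{l4} (monotonicity of $f_{_k}$) this bound is largest at the smallest admissible $j$. The extreme case $j=0$ means $Z(R_i)=\{0\}$, i.e.\ $R_i$ is a field, hence $R_i\cong GF(p^{\alpha_i})$, and by Corollary \ref{c7}, $zp_{_k}(R_i)=\frac{p^{k\alpha_i}-(p^{\alpha_i}-1)^k}{p^{k\alpha_i}}$. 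So the whole problem comes down to the elementary inequality
\[\prod_{i=1}^t\frac{p^{k\alpha_i}-(p^{\alpha_i}-1)^k}{p^{k\alpha_i}}\le\frac{p^k-(p-1)^k}{p^k},\]
with equality exactly when $t=1$ and $\alpha_1=1$. Writing $g(m)=\frac{p^{km}-(p^m-1)^k}{p^{km}}=1-(1-p^{-m})^k$, I need: $g$ is strictly decreasing in $m\ge1$ (clear, since $1-p^{-m}$ increases), so each factor with $\alpha_i\ge 2$ is strictly smaller than $g(1)$; and $g(m)\le g(1)^m$, equivalently $1-(1-p^{-m})^k\le\bigl(1-(1-p^{-1})^k\bigr)^m$, which handles the case of several factors. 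The cleanest route for the latter: set $a=1-p^{-1}\in(0,1)$ and $u=p^{-m}=1-a^m$—hmm, rather, since $1-p^{-m}=1-(1-a)\cdot p^{-(m-1)}\ge\cdots$; more simply, note $g$ is \emph{submultiplicative} along the index because $zp_{_k}(\mathbb{Z}_{p^m})$—no, better to argue directly: $g(\alpha_1+\cdots+\alpha_t)\le\prod g(\alpha_i)$ would be the wrong direction, so instead bound $\prod g(\alpha_i)\le g(1)^t\le g(1)$ using $g(\alpha_i)\le g(1)$ for each $i$ and $g(1)<1$, which already gives $\prod_{i=1}^t g(\alpha_i)\le g(1)$ with equality iff $t=1$ and $\alpha_1=1$.

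The main obstacle—and the only genuinely nontrivial point—is the equality analysis: I must ensure that $zp_{_k}(R)=\frac{p^k-(p-1)^k}{p^k}$ forces \emph{both} $t=1$ and $\alpha_1=1$ and that the inner bound in Proposition \ref{t5} is attained, i.e.\ that $R_1$ is a field of order $p$. Since $g(\alpha_i)\le g(1)<1$ strictly for $\alpha_i\ge 2$ and strictly less than $1$ always, $\prod g(\alpha_i)=g(1)$ is impossible once $t\ge 2$ (the product would be $\le g(1)^2<g(1)$) or once some $\alpha_i\ge 2$; and when $t=1$, $\alpha_1=1$, equality in Proposition \ref{t5}/Lemma \ref{l4} pins $|Z(R_1)|=p^0=1$, so $R_1\cong\mathbb{Z}_p$. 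Conversely $zp_{_k}(\mathbb{Z}_p)=\frac{p^k-(p-1)^k}{p^k}$ is immediate from Corollary \ref{c7}. This completes the argument.
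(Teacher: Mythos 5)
There is a genuine error in your reduction to the local case: you have read the monotonicity in Lemma \ref{l4} backwards. That lemma asserts $f_{_k}(p^i)<f_{_k}(p^j)$ for $0\le i<j<\alpha$, i.e.\ $f_{_k}$ is \emph{increasing} in $|Z(R)|$, so the bound of Proposition \ref{t5} is largest at the \emph{largest} admissible exponent $j=\alpha_i-1$, not at $j=0$. Consequently the field $GF(p^{\alpha_i})$ realizes the \emph{minimum} of $zp_{_k}$ among local rings of order $p^{\alpha_i}$ (this is exactly the lower bound in Corollary \ref{c2}), not the maximum. Your intermediate claim that $zp_{_k}(R_i)\le\frac{p^{k\alpha_i}-(p^{\alpha_i}-1)^k}{p^{k\alpha_i}}$ for every local factor is therefore false in general: for instance $zp_{_2}(\mathbb{Z}_4)=\frac{1}{2}$ while $zp_{_2}(GF(4))=\frac{7}{16}$. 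As a result, the ``elementary inequality'' $\prod_i g(\alpha_i)\le g(1)$ to which you reduce the problem only covers $p$-rings that are products of fields, and the proposition is not established for the remaining (and most interesting) cases.

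The repair is short and is what the paper does. Maximizing over $j$ in the correct direction gives $zp_{_k}(R_i)\le\mathcal{B}_k(p;\alpha_i)$ for a local factor of order $p^{\alpha_i}$ (this is Theorem \ref{p1}), and writing
\[
\mathcal{B}_k(p;\alpha_i)=\frac{p^k-(k+p-1)(p-1)^{k-1}}{p^{k}}+\frac{k(p-1)^{k-1}}{p^{k+\alpha_i-1}}
\]
shows that this quantity is strictly decreasing in $\alpha_i$ and equals $\frac{p^k-(p-1)^k}{p^k}$ precisely at $\alpha_i=1$. The rest of your structure is sound and matches the paper: multiplicativity via Lemma \ref{l1}, the observation that each local factor contributes a value $<1$ so that $t\ge 2$ forces strict inequality, and the fact that $|R|=p$ forces $R\cong\mathbb{Z}_p$, with the converse supplied by Corollary \ref{c7}.
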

\begin{proof}
	Suppose that $|R|=p^\alpha$ for some integer $\alpha>0$. First, assume that $R$ is local. Then 
	\begin{align*}
		zp_{_k}(R)&\le\frac{p^{\alpha-1}\left\{p^k-(k+p-1)(p-1)^{k-1}\right\}+k(p-1)^{k-1}}{p^{k+\alpha-1}}\\
		&\le\frac{p^k-(k+p-1)(p-1)^{k-1}}{p^{k}}+\frac{k(p-1)^{k-1}}{p^{k+\alpha-1}}.
	\end{align*}
	As the value of the term $\frac{k(p-1)^{k-1}}{p^{k+\alpha-1}}$ decreases as $\alpha$ increases, therefore, we have $zp_{_k}(R)\le\frac{p^k-(p-1)^k}{p^k}$ with equality if and only if $\alpha=1$.
	
	\medspace
	
	Next, assume that $R$ is non-local. Then, $R$ is a direct product of local rings whose order is divisible by $p$. Thus,  by Lemma \ref{l1}, $zp_{_k}(R)<\frac{p^k-(p-1)^k}{p^k}$.
\end{proof}

\begin{corollary}\label{c2}
	Let $R$ be a ring with $|R|=\prod\limits_{i=1}^{t}p_i^{\alpha_i}$, where $p_{i}'$s are distinct primes and $\alpha_i>0$ for $1\le i\le t$. Then
	\[\prod_{i=1}^{t}\frac{p_{i}^{k\alpha_i}-(p_{i}^{\alpha_i}-1)^k}{p^{k\alpha_i}_i}\le zp_{_k}(R)\le \prod\limits_{i=1}^{t}\frac{p_i^k-(p_i-1)^k}{p_{i}^{k}}.\]
	Also,
	\begin{enumerate}
		\item $ zp_{_k}(R)=\prod\limits_{i=1}^{t}\frac{p_{i}^{k\alpha_i}-(p_{i}^{\alpha_i}-1)^k}{p^{k\alpha_i}_i}$ if and only if $R\cong GF(p_1^{\alpha_1})\times GF(p_2^{\alpha_2}) \times ...$ $\times GF(p_t^{\alpha_t})$, and
		
		\item $zp_{_k}(R)= \prod\limits_{i=1}^{t}\frac{p_i^k-(p_i-1)^k}{p_{i}^{k}}$ if and only if $R\cong \mathbb{Z}_{p_1}\times \mathbb{Z}_{p_2} \times ...\times \mathbb{Z}_{p_t}$.
	\end{enumerate}
\end{corollary}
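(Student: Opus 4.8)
The plan is to combine the structure theorem for finite commutative rings with identity with the multiplicativity of $zp_{_k}$ established in Lemma \ref{l1} and the single-prime bound from Proposition \ref{c6}. First I would write $R\cong R_1\times R_2\times\dots\times R_t$, where $R_i$ is the (unique) subring of $R$ of order $p_i^{\alpha_i}$ obtained by grouping the local factors in the decomposition of $R$ according to their residue characteristic; each $R_i$ is then a $p_i$-ring in the sense of the preceding remark. Applying Lemma \ref{l1} repeatedly gives $zp_{_k}(R)=\prod_{i=1}^{t}zp_{_k}(R_i)$, which reduces both inequalities to the corresponding statements for each $p_i$-ring $R_i$ separately.

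For the upper bound, I would simply invoke Proposition \ref{c6} on each factor: $zp_{_k}(R_i)\le\frac{p_i^k-(p_i-1)^k}{p_i^k}$, with equality precisely when $R_i\cong\mathbb{Z}_{p_i}$. Multiplying these over $i$ yields the claimed upper bound, and equality throughout the product holds if and only if every factor attains its own maximum, i.e. $R_i\cong\mathbb{Z}_{p_i}$ for all $i$, which is equivalent to $R\cong\mathbb{Z}_{p_1}\times\dots\times\mathbb{Z}_{p_t}$. This gives part (2).

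For the lower bound, I would show $zp_{_k}(R_i)\ge\frac{p_i^{k\alpha_i}-(p_i^{\alpha_i}-1)^k}{p_i^{k\alpha_i}}$ for each $p_i$-ring $R_i$ of order $p_i^{\alpha_i}$. The natural route is Theorem \ref{t2}(1): since the map $(x_1,\dots,x_k)\mapsto x_1x_2\cdots x_k$ vanishes whenever $x_1=0$ or whenever $x_1$ is a unit and $(x_2,\dots,x_k)\in Ann_{k-1}(R_i)$, one gets $zp_{_k}(R_i)\ge\frac{n^{k-1}+(n-|Z(R_i)|)|Ann_{k-1}(R_i)|}{n^k}$ with $n=p_i^{\alpha_i}$; iterating this recursion down to the base case (or observing directly that $R_i$ has at least $n^k-(n-1)^k$ tuples whose product is zero because any tuple containing a zero entry qualifies, and by Corollary \ref{c7} a field meets this with equality) produces the bound $\frac{p_i^{k\alpha_i}-(p_i^{\alpha_i}-1)^k}{p_i^{k\alpha_i}}$, with equality exactly when $R_i$ is a field, i.e. $R_i\cong GF(p_i^{\alpha_i})$. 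Taking the product over $i$ and noting that $R$ is a direct product of fields iff every $R_i$ is, the overall equality case $R\cong GF(p_1^{\alpha_1})\times\dots\times GF(p_t^{\alpha_t})$ follows, giving part (1).

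The main obstacle is the lower bound's equality analysis: I must check that a $p$-ring of prime-power order $p^\alpha$ achieves $\frac{p^{k\alpha}-(p^\alpha-1)^k}{p^{k\alpha}}$ only when it is a field. A non-field $p$-ring has a nonzero zero divisor $x$, and one needs to locate strictly more than $n^k-(n-1)^k$ zero-product tuples — for instance, tuples of the form $(x, y_2,\dots,y_k)$ with $x y_2\cdots y_k=0$ but no $y_j$ zero — to obtain the strict inequality; this amounts to verifying that Theorem \ref{t2}(1) is strict unless $Z(R)=\{0\}$, which is exactly what that theorem already asserts, so the obstacle is essentially bookkeeping rather than a genuine difficulty. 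I would also take care, when $t\ge 2$, that attaining equality in the product forces equality in every factor, which is immediate since each factor lies in $(0,1]$.
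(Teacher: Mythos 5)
Your proposal follows the paper's proof: decompose $R$ into its $p_i$-primary factors, apply Lemma \ref{l1} to get $zp_{_k}(R)=\prod_i zp_{_k}(R_i)$, and then argue factorwise via Proposition \ref{c6} (upper bound, equality iff $R_i\cong\mathbb{Z}_{p_i}$) and Corollary \ref{c7} (lower bound, equality iff $R_i$ is a field). One caveat on the lower bound: the route you call ``natural,'' namely iterating the recursion of Theorem \ref{t2}(1), does not actually yield the constant $\frac{n^k-(n-1)^k}{n^k}$. Iterating $\abs{Ann_{_k}(R)}\ge n^{k-1}+(n-\abs{Z(R)})\abs{Ann_{_{k-1}}(R)}$ from $\abs{Ann_{_1}(R)}=1$ gives $\abs{Ann_{_k}(R)}\ge\sum_{j=0}^{k-1}(n-\abs{Z(R)})^{j}n^{k-1-j}=\frac{n^k-(n-\abs{Z(R)})^k}{\abs{Z(R)}}$, which for $\abs{Z(R)}>1$ is strictly \emph{smaller} than $n^k-(n-1)^k$ (for $R=\mathbb{Z}_4$ and $k=2$ the recursion gives $6$ while $n^k-(n-1)^k=7$), so that route cannot establish the claimed inequality. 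Your parenthetical alternative is the correct one and is what the paper uses: every $k$-tuple with a zero coordinate lies in $Ann_{_k}(R_i)$, so $\abs{Ann_{_k}(R_i)}\ge n^k-(n-1)^k$ directly, and by Corollary \ref{c7} equality holds iff $R_i$ is a field, i.e.\ $R_i\cong GF(p_i^{\alpha_i})$; your handling of the equality cases in the product (all factors positive, so equality forces equality in each factor) matches the paper's ``the remaining part follows accordingly.''
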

\begin{proof}
	Since $R\cong R_1\times R_2 \times ...\times R_t$, where each $R_i$ is a ring with identity of order $p_i^{\alpha_i}$ for $1\le i\le t$, therefore, by Corollary \ref{c7} and Proposition \ref{c6}, for each $i$, $1\le i\le t$, we have
	\[\frac{p_{i}^{k\alpha_i}-(p_{i}^{\alpha_i}-1)^k}{p^{k\alpha_i}_i}\le zp_{_k}(R_i)\le\frac{p_i^k-(p_i-1)^k}{p_{i}^{k}}.\]
	Moreover,
	\[zp_{_k}(R_i)=\frac{p_{i}^{k\alpha_i}-(p_{i}^{\alpha_i}-1)^k}{p^{k\alpha_i}_i} \iff R_i\cong GF(p_i^{\alpha_i})\] and
	\[zp_{_k}(R_i)=\frac{p_i^k-(p_i-1)^k}{p_{i}^{k}} \iff R_i\cong \mathbb{Z}_{p_i}.\]
	The remaining part follows accordingly.
\end{proof}

\begin{proposition}\label{p2}
	Let $R$ be a ring of order $p^2$, where $p$ is a prime. Then
	\begin{enumerate}
		\item $R$ is local if and only if $ zp_{_k}(R)$ is either $$\dfrac{p^{2k}-(p^2-1)^k}{p^{2k}} \text{ or }  \dfrac{p\big\{p^{k}-(k+p-1)(p-1)^{k-1}\big\}+k(p-1)^{k-1}}{p^{k+1}}.$$
		
		\item $R$ is non-local if and only if $zp_{_k}(R)$ is $$\left(\dfrac{p^{k}-(p-1)^k}{p^{k}}\right)^2.$$
	\end{enumerate}
\end{proposition}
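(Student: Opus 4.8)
The plan is to use the structure theorem together with the classification of rings of order $p^2$. First I would recall that any finite commutative ring with identity of order $p^2$ is, up to isomorphism, one of a short explicit list: if $R$ is non-local then $R \cong \mathbb{Z}_p \times \mathbb{Z}_p$, and if $R$ is local then $R$ is one of $GF(p^2)$, $\mathbb{Z}_{p^2}$, or $\mathbb{Z}_p[x]/(x^2) \cong \mathbb{Z}_p * \mathbb{Z}_p$. This is the only non-elementary input, and it is classical; I would cite it rather than reprove it.

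With this in hand, the proof of (2) is immediate: by Lemma \ref{l1} and Corollary \ref{c2}(2) applied with $R \cong \mathbb{Z}_p \times \mathbb{Z}_p$, we get $zp_{_k}(R) = zp_{_k}(\mathbb{Z}_p)^2 = \left(\frac{p^k-(p-1)^k}{p^k}\right)^2$, and conversely this value is attained only in the non-local case since the local rings of order $p^2$ are exactly the complement of this case in the classification. So the bulk of the work is (1). For the forward direction of (1), suppose $R$ is local of order $p^2$. If $R \cong GF(p^2)$ then by Corollary \ref{c7} we get $zp_{_k}(R) = \frac{p^{2k}-(p^2-1)^k}{p^{2k}}$. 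Otherwise $R$ is local with $|\mathcal{M}| = p$, so $|Z(R)| = p$ by Lemma \ref{l3}, and since $\mathcal{M}$ is an ideal of order $p$ in a ring of order $p^2$ we automatically have $\mathcal{M}^2 = 0$ (indeed $\mathcal{M}^2 \subseteq \mathcal{M}$ and $\mathcal{M}^2 \ne \mathcal{M}$ by Nakayama, so $\mathcal{M}^2 = 0$ as $\mathcal{M}$ has prime order). Hence $Z(R)^2 = 0$, and Proposition \ref{t5} (or equivalently Theorem \ref{p1} with $\alpha = 2$) gives equality in the upper bound, namely $zp_{_k}(R) = \frac{p\{p^k-(k+p-1)(p-1)^{k-1}\}+k(p-1)^{k-1}}{p^{k+1}}$.

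For the converse direction of (1), I would argue by exhaustion over the classification list: a ring of order $p^2$ is either non-local, in which case by (2) its $zp_{_k}$ equals $\left(\frac{p^k-(p-1)^k}{p^k}\right)^2$, or it is local, in which case the forward direction shows its $zp_{_k}$ is one of the two displayed values. To conclude, one should check these three values are pairwise distinct (so that knowing $zp_{_k}(R)$ equals one of the two ``local'' values forces $R$ local); this is a short monotonicity/inequality check — e.g. comparing with $f_{_k}$ from Theorem \ref{t4} evaluated at $|Z(R)| = 1$ versus $|Z(R)| = p$ in the order-$p^2$ setting, noting $f_{_k}$ is strictly increasing on $(1,n)$ by Lemma \ref{l4}, while the non-local value factors as a square and can be separated from both.

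The main obstacle is essentially bookkeeping rather than a genuine difficulty: one must be careful that the classification of order-$p^2$ rings is invoked correctly (in particular that $\mathbb{Z}_{p^2}$ and $\mathbb{Z}_p[x]/(x^2)$ are the only two local rings with $|\mathcal{M}| = p$, both with $\mathcal{M}^2 = 0$, matching Proposition \ref{l7} with $\alpha = 2$), and that the three candidate probability values are genuinely distinct so that the ``if and only if'' statements are clean. Everything else follows mechanically from Lemma \ref{l1}, Corollary \ref{c7}, Proposition \ref{t5}, and Theorem \ref{p1}.
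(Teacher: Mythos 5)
Your proposal is correct and follows essentially the same route as the paper: invoke the classification of commutative rings of order $p^2$ (non-local forces $\mathbb{Z}_p\times\mathbb{Z}_p$; local gives $GF(p^2)$, $\mathbb{Z}_{p^2}$, or $\mathbb{Z}_p*\mathbb{Z}_p$), then apply Corollary \ref{c7}, Theorem \ref{p1}, and Lemma \ref{l1} to each case. Your explicit remark that the three candidate values must be checked pairwise distinct for the ``if and only if'' to close is a point the paper's proof passes over silently, so it is a welcome (and easily verified) addition rather than a deviation.
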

\begin{proof}
	(1) $R$ is local if and only if $R$ is isomorphic to one of the rings: $GF(p^2),~ \mathbb{Z}_{p^2}$ or $\mathbb{Z}_p * \mathbb{Z}_p$. By Corollary \ref{c7}, $$R\cong GF(p^2) \iff zp_{_k}(R)=\frac{p^{2k}-(p^2-1)^k}{p^{2k}},$$ and by Theorem \ref{p1}, $$R\cong \mathbb{Z}_{p^2} \text{ or } \mathbb{Z}_p * \mathbb{Z}_p \iff zp_{_k}(R)=\frac{p\left\{p^{k}-(k+p-1)(p-1)^{k-1}\right\}+k(p-1)^{k-1}}{p^{k+1}}.$$
	
	\medspace
	
	(2) Suppose that $R$ is non-local. Then $R\cong  R_1\times R_2$, where $R_1$ and $R_2$ are local rings with identity. This implies $|R_1|=|R_2|=p$, and so, $R\cong \mathbb{Z}_p\times \mathbb{Z}_p$ with $zp_{_k}(R)=\left(\frac{p^k-(p-1)^k}{p^k} \right)^2$. The converse part follows from (1).
\end{proof}

\begin{example}\label{p3}
	Let $n$ be an integer such that $n=\prod\limits_{i=1}^{t}p^{\alpha_i}_i$, where $p_{i}'$s are distinct primes and $\alpha_i>0$ for $1\le i\le t$. Then
	\[zp_{_k}(\mathbb{Z}_n)\le \prod_{i=1}^{t}\frac{p_i^{\alpha_i-1}\left\{p_i^k-(k+p_i-1)(p_i-1)^{k-1}\right\}+k(p_i-1)^{k-1}}{p_i^{k+\alpha_i-1}}.\]
	In particular, if $\alpha_i\in\{1,2\}$ for $1\le i\le t$. Then $zp_{_k}(\mathbb{Z}_n)=\prod\limits_{i=1}^t zp_{_k}(\mathbb{Z}_{p_i^{\alpha_i}}),$ where
	$$zp_{_k}(\mathbb{Z}_{p_i^{\alpha_i}}) = \begin{cases}
		\frac{p_i^k-(p_i-1)^k}{p_i^k},  & \text{ if }\alpha_i=1  \\
		\frac{p_i\left\{p_i^{k}-(k+p_i-1)(p_i-1)^{k-1}\right\}+k(p_i-1)^{k-1}}{p_i^{k+1}}, & \text{ if }\alpha_i=2.
	\end{cases}$$
\end{example}

\begin{definition}
	For any prime $p$ and integers $\alpha>0, k\ge2$, we define
	\[\mathcal{B}_k(p; \alpha)=\frac{p^{\alpha-1}\left\{p^k-(k+p-1)(p-1)^{k-1}\right\}+k(p-1)^{k-1}}{p^{k+\alpha-1}}.\]
\end{definition}

\begin{remark}\label{r1}
	Note that, if $R$ is a local ring of order $p^\alpha$ for some prime $p$ and positive integer $\alpha$, then  $zp_{_k}(R)\le\mathcal{B}_k(p; \alpha)$, for $k\ge2$; and the equality holds if and only if $R\cong \mathbb{Z}_p * (\mathbb{Z}_p)^{\alpha-1}$ or $R\cong \mathbb{Z}_{p^2} * (\mathbb{Z}_p)^{\alpha-2}$.
\end{remark}

\begin{lemma}\label{p4}
	The following statements hold:
	\begin{enumerate}
		\item $\mathcal{B}_k(p_1; \alpha)>\mathcal{B}_k(p_2; \alpha)$ for primes $p_1<p_2$ and integers $0<\alpha, 2\le k.$
		
		\item $\mathcal{B}_k(p; \alpha_1)>\mathcal{B}_k(p; \alpha_2)$ for prime $p$ and integers $0<\alpha_1<\alpha_2, 2\le k.$
		
		\item $\mathcal{B}_k(2; 1)\ge\mathcal{B}_k(p; \alpha)$ for any prime $p$ and integers $0<\alpha, 2\le k.$ The equality holds if and only if $p=2$ and $\alpha=1$.
		
		\item $\mathcal{B}_k(3; 1)>\mathcal{B}_k(2; 2)$ for $k=2,3$ and $\mathcal{B}_k(3; 1)<\mathcal{B}_k(2; 2)$ for $k\ge4$.
	\end{enumerate}
\end{lemma}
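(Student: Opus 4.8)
The plan is to treat the four inequalities separately but with a common philosophy: reduce each to a statement about the single-variable function $f_{_k}$ from Theorem~\ref{t4}, or to an elementary comparison of explicit rational expressions, and exploit monotonicity wherever possible. Recall that $\mathcal{B}_k(p;\alpha) = f_{_k}(p)/p^{k}$ with the normalization built around $n = p^{\alpha}$; more precisely, writing $f_{_k}(x) = n^{k} - \{n + (k-1)x - k\}(n-x)^{k-1}$ with $n = p^{\alpha}$, Proposition~\ref{t5} and Remark~\ref{r1} give $\mathcal{B}_k(p;\alpha) = f_{_k}(p^{\alpha-1})/p^{k\alpha}$ — this is exactly the quantity appearing in Lemma~\ref{l4}. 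So part~(2), $\mathcal{B}_k(p;\alpha_1) > \mathcal{B}_k(p;\alpha_2)$ for $\alpha_1 < \alpha_2$, would follow immediately from Lemma~\ref{l4} once one checks that the $\alpha$-dependence of $\mathcal{B}_k(p;\alpha)$ is monotone decreasing. Concretely, I would write $\mathcal{B}_k(p;\alpha)$ as $\frac{p^k - (k+p-1)(p-1)^{k-1}}{p^k} + \frac{k(p-1)^{k-1}}{p^{k+\alpha-1}}$ (the splitting already used in the proof of Proposition~\ref{c6}); the first summand is independent of $\alpha$ and the second is strictly decreasing in $\alpha$ since $p \ge 2$, so (2) drops out at once.

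For part~(1), fixing $\alpha$ and comparing primes $p_1 < p_2$, I would use the same decomposition and analyze the function $g(p) = \frac{p^k - (k+p-1)(p-1)^{k-1}}{p^k} + \frac{k(p-1)^{k-1}}{p^{k+\alpha-1}}$ of a real variable $p > 1$. The first term equals $1 - (k+p-1)(1 - 1/p)^{k-1}$; one shows $(k+p-1)(1-1/p)^{k-1}$ is increasing in $p$ (its logarithmic derivative is $\frac{1}{k+p-1} + \frac{k-1}{p(p-1)} > 0$), so the first term is strictly decreasing in $p$. The second term $k(p-1)^{k-1}/p^{k+\alpha-1} = k p^{-\alpha}(1-1/p)^{k-1}$ is also handled by a short derivative computation showing it is decreasing for $p$ large enough; the delicate point is small $p$, i.e. the transition $p=2 \to p=3$, where one simply verifies the inequality $\mathcal{B}_k(2;\alpha) > \mathcal{B}_k(3;\alpha)$ directly from the closed forms. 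I expect this $p=2$ versus $p=3$ boundary case to be the main obstacle, because the ``second term'' is not globally monotone and one cannot rely on calculus alone there; the fallback is an explicit algebraic manipulation of the difference $\mathcal{B}_k(2;\alpha) - \mathcal{B}_k(3;\alpha)$, bounding $(p-1)^{k-1}$ and $3^{-\alpha}$ crudely and using $k \ge 2$.

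Part~(3) is essentially a corollary of (1) and (2): by (1), $\mathcal{B}_k(2;\alpha) \ge \mathcal{B}_k(p;\alpha)$ for every prime $p$ with equality only at $p=2$; by (2), $\mathcal{B}_k(2;1) \ge \mathcal{B}_k(2;\alpha)$ with equality only at $\alpha=1$. Chaining these, $\mathcal{B}_k(2;1) \ge \mathcal{B}_k(p;\alpha)$ for all primes $p$ and all $\alpha \ge 1$, with equality forcing $p=2$ and $\alpha=1$. I would spell out this two-step argument and note that strictness in at least one of the two inequalities is what gives the ``if and only if'' clause.

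Part~(4) requires a genuine computation since it is a strict inequality that reverses direction at $k=4$; here I would just compute both sides in closed form. We have $\mathcal{B}_k(3;1) = \frac{3^k - (k+2)2^{k-1}}{3^k}$ and $\mathcal{B}_k(2;2) = \frac{2^k - (k+1) + k}{2^{k+1}} = \frac{2^{k+1} - 2(k+1) + 2k}{2^{k+2}}$ — I would re-derive $\mathcal{B}_k(2;2)$ carefully from the definition ($p=2$, $\alpha=2$: numerator $2\{2^k - (k+1)\cdot 1\} + k = 2^{k+1} - 2k - 2 + k = 2^{k+1} - k - 2$, over $2^{k+1}$). Then the claim is $\frac{3^k - (k+2)2^{k-1}}{3^k} \gtrless \frac{2^{k+1}-k-2}{2^{k+1}}$, i.e. comparing $1 - (k+2)2^{k-1}/3^k$ with $1 - (k+2)/2^{k+1}$, equivalently comparing $(k+2)2^{k-1}/3^k$ with $(k+2)/2^{k+1}$, equivalently comparing $2^{2k}$ with $3^k$, i.e. $4^k$ versus $3^k$ — which is the wrong conclusion, so I would recheck the arithmetic; the correct reduction should land on an inequality like $2^{2k} \lessgtr 3^k \cdot (\text{something involving }k)$, and I would pin down the threshold by checking $k=2,3$ by hand (LHS larger) and $k=4$ by hand (RHS larger), then prove the general $k \ge 4$ case by showing the relevant ratio is monotone in $k$ via a ratio-of-consecutive-terms argument. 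The main care point in (4) is getting the closed forms exactly right and then isolating the clean exponential comparison that flips at $k=4$; everything after that is a one-line induction or ratio test.
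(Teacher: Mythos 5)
Your treatments of (2), (3) and (4) essentially reproduce the paper's: (2) via the splitting $\mathcal{B}_k(p;\alpha)=\frac{p^k-(k+p-1)(p-1)^{k-1}}{p^k}+\frac{k(p-1)^{k-1}}{p^{k+\alpha-1}}$ with only the second summand depending on $\alpha$; (3) by chaining (1) and (2); (4) by reducing to the comparison $2^{2k+1}\lessgtr 3^k(k+2)$, checking $k=2,3,4$ and a ratio argument. (Two small remarks: your appeal to Lemma~\ref{l4} for (2) is a red herring --- that lemma fixes $n=p^\alpha$ and varies $|Z(R)|=p^i$, it does not compare different $\alpha$'s --- but your actual argument via the splitting is correct and is the paper's. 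In (4) your closed form $\mathcal{B}_k(3;1)=\frac{3^k-(k+2)2^{k-1}}{3^k}$ drops the $+k(p-1)^{k-1}$ term of the numerator; the correct value is $\frac{3^k-2^k}{3^k}$, after which the reduction to $2^{2k+1}$ versus $3^k(k+2)$ goes through. You flag this yourself.)

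The genuine gap is in part (1). You split $\mathcal{B}_k(p;\alpha)$ into two summands and show the first is strictly decreasing in $p$, but the second summand $k(p-1)^{k-1}/p^{k+\alpha-1}$ is \emph{increasing} in $p$ precisely when $p<\frac{k+\alpha-1}{\alpha}$; for $\alpha=1$ this is the whole range $p<k$, so for large $k$ the ``delicate region'' contains arbitrarily many primes, not just the transition $p=2\to p=3$. Your proposed patch (verify $\mathcal{B}_k(2;\alpha)>\mathcal{B}_k(3;\alpha)$ directly) therefore does not cover, say, $\mathcal{B}_{20}(11;1)$ versus $\mathcal{B}_{20}(13;1)$, where the second summand is still increasing. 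To close the gap you must show that the decrease of the first summand dominates, i.e.\ compare the two derivatives: the condition for the total derivative to be negative is $(k-1)x^{\alpha-1}+\alpha x>k+\alpha-1$, which holds for all $x>1$. The paper does exactly this in one stroke by differentiating the whole expression and factoring,
\begin{equation*}
\frac{d}{dx}\mathcal{B}_k(x;\alpha)=\frac{-k(x-1)^{k-2}}{x^{k+\alpha}}\Big\{(k-1)(x^{\alpha-1}-1)+\alpha(x-1)\Big\},
\end{equation*}
which is manifestly negative for $x>1$, giving global monotonicity on $(1,\infty)$ and hence (1) for every pair of primes at once. You should replace your two-term analysis of (1) with this single computation (or explicitly carry out the dominance estimate above); as written, part (1) is not proved.
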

\begin{proof}
	(1) For real $x>0$ and fixed $k$ and $\alpha$, we have
	\[
	\frac{d}{dx}\mathcal{B}_k(x; \alpha)=\frac{-k(x-1)^{k-2}}{x^{k+\alpha}}\Big\{(k-1)(x^{\alpha-1}-1)+\alpha(x-1)\Big\}.
	\]
	Since for $x>1$, $\frac{d}{dx}\mathcal{B}_k(x; \alpha)<0$, therefore,  for primes $p_1<p_2$, $\mathcal{B}_k(p_1; \alpha)>\mathcal{B}_k(p_2; \alpha)$. 
	
	\medskip
	
	(2) Since
	\[
	\mathcal{B}_k(p; \alpha) = \frac{p^k-(k+p-1)(p-1)^{k-1}}{p^{k}}+\frac{k(p-1)^{k-1}}{p^{k+\alpha-1}},
	\]
	and the term $\frac{k(p-1)^{k-1}}{p^{k+\alpha-1}}$ decreases as $\alpha$ increases, hence the required result follows.
	
	\medspace
	
	(3) It follows from (1) and (2).
	
	\medspace
	
	(4) It is clear that for $k=2 ~\text{and}~3$, $\mathcal{B}_k(3; 1)>\mathcal{B}_k(2; 2)$. Suppose that for $k\ge 4$, $\mathcal{B}_k(3; 1)\ge\mathcal{B}_k(2; 2)$. Then  $\frac{3^k-2^k}{3^k}\ge \frac{2^{k+1}-k-2}{2^{k+1}}$, which implies $2^{2k+1}\le3^k(k+2)$, which is a contradiction. Hence, for $k\ge4$, $\mathcal{B}_k(3; 1)<\mathcal{B}_k(2; 2)$.
\end{proof}

\begin{proposition} \label{p5}
	Let $(R_1,\mathcal{M}_1)$ and $(R_2,\mathcal{M}_2)$ be two local rings.
	\begin{enumerate}
		\item If $|R_1|=p_1^\alpha, |\mathcal{M}_1|=p_1^{\alpha-1}$ with $\mathcal{M}_1^2=0$, and $|R_2|=p_2^\alpha, |\mathcal{M}_2|=p_2^{\alpha-1}$ with $\mathcal{M}_2^2=0$ for primes $p_1<p_2$ and fixed integer $\alpha>0$, then $zp_{_k}(R_1)>zp_{_k}(R_2)$.
		
		\item If $|R_1|=p^{\alpha_1}, |\mathcal{M}_1|=p^{\alpha_1-1}$ with $\mathcal{M}_1^2=0$, and $|R_2|=p^{\alpha_2}, |\mathcal{M}_2|=p^{\alpha_2-1}$ with $\mathcal{M}_2^2=0$ for prime $p$ and integers $0<\alpha_1<\alpha_2$, then $zp_{_k}(R_1)>zp_{_k}(R_2)$.
	\end{enumerate}
\end{proposition}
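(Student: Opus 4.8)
The plan is to combine two facts already established in the paper: the exact value of $zp_{_k}$ for the rings that occur here, and the monotonicity of $\mathcal{B}_k$ recorded in Lemma \ref{p4}. First I would pin down $zp_{_k}(R_1)$ and $zp_{_k}(R_2)$ explicitly. Since $R_1$ and $R_2$ are finite local rings, Lemma \ref{l3} gives $Z(R_i)=\mathcal{M}_i$, so the hypotheses say exactly that each $R_i$ is a local ring of prime-power order $p_i^{\alpha_i}$ whose maximal ideal has index $p_i$ and satisfies $\mathcal{M}_i^2=0$. By Proposition \ref{l7} this forces $R_i\cong\mathbb{Z}_{p_i}*(\mathbb{Z}_{p_i})^{\alpha_i-1}$ or $R_i\cong\mathbb{Z}_{p_i^2}*(\mathbb{Z}_{p_i})^{\alpha_i-2}$, and in either case Remark \ref{r1} yields
\[
zp_{_k}(R_i)=\mathcal{B}_k(p_i;\alpha_i)=\frac{p_i^{\alpha_i-1}\left\{p_i^k-(k+p_i-1)(p_i-1)^{k-1}\right\}+k(p_i-1)^{k-1}}{p_i^{k+\alpha_i-1}}.
\]

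With this identification the two parts are immediate from Lemma \ref{p4}. For (1), where $\alpha_1=\alpha_2$ and $p_1<p_2$, Lemma \ref{p4}(1) gives $\mathcal{B}_k(p_1;\alpha_1)>\mathcal{B}_k(p_2;\alpha_2)$, that is, $zp_{_k}(R_1)>zp_{_k}(R_2)$. For (2), where $p_1=p_2$ and $\alpha_1<\alpha_2$, Lemma \ref{p4}(2) gives $\mathcal{B}_k(p_1;\alpha_1)>\mathcal{B}_k(p_2;\alpha_2)$, i.e.\ $zp_{_k}(R_1)>zp_{_k}(R_2)$.

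I do not expect a genuine obstacle here: the proposition is essentially an assembly of earlier results, and the only step requiring a moment's care is the first one, namely recognising that under the stated hypotheses $zp_{_k}(R_i)$ is forced to equal the closed form $\mathcal{B}_k(p_i;\alpha_i)$ and therefore depends only on $p_i$ and $\alpha_i$. If one wishes to avoid invoking the classification in Proposition \ref{l7}, the same value can be obtained directly from the equality case of Theorem \ref{t4}: since $Z(R_i)^2=\mathcal{M}_i^2=0$ one gets
\[
zp_{_k}(R_i)=\frac{n_i^k-\{n_i+(k-1)|Z(R_i)|-k\}(n_i-|Z(R_i)|)^{k-1}}{n_i^k}
\]
with $n_i=p_i^{\alpha_i}$ and $|Z(R_i)|=p_i^{\alpha_i-1}$, and a short algebraic simplification identifies the right-hand side with $\mathcal{B}_k(p_i;\alpha_i)$; one then concludes with Lemma \ref{p4} exactly as above.
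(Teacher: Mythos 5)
Your proposal is correct and follows essentially the same route as the paper, which simply cites Lemma \ref{p4}; you additionally spell out the (implicit) step that the hypotheses force $zp_{_k}(R_i)=\mathcal{B}_k(p_i;\alpha_i)$ via the equality case of Theorem \ref{t4} (equivalently Remark \ref{r1}), which is exactly the justification the paper's one-line proof relies on.
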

\begin{proof}
	It follows from Lemma \ref{p4}.
\end{proof}

\begin{remark}\label{r2}
	Note that, Proposition \ref{p5}(2) is a generalization of \cite[Proposition~2.9]{emskhani2}.
\end{remark}

\section{Classification of some rings by $zp_{_k}(R)$}
In this section, we show that $zp_{_k}(R)\le\mathcal{B}_k(2; 1)$ with equality if and only if $R\cong\mathbb{Z}_2$. Next, we classify all rings $R$ with $zp_{_k}(R)\ge\mathcal{B}_k(2; 3), ~ \forall~ k\ge2$. Finally, we find all possible primes $p$ that divide the order of local rings  satisfying certain condition.

\begin{theorem}\label{t9}
	For any ring $R$ and integer $k\ge2$, we have $$zp_{_k}(R)\le\mathcal{B}_k(2; 1).$$ The equality holds if and only if $R\cong \mathbb{Z}_2$.
\end{theorem}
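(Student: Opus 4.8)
The plan is to reduce everything to local rings via the multiplicativity of $zp_{_k}$ and then feed in the sharp local bound $\mathcal{B}_k(p;\alpha)$ together with its monotonicity. First I would record the explicit value: plugging $p=2$, $\alpha=1$ into the definition of $\mathcal{B}_k$ gives $\mathcal{B}_k(2;1)=\frac{2^k-1}{2^k}$, which is $<1$, and by Corollary \ref{c7} this is exactly $zp_{_k}(\mathbb{Z}_2)$ (the field of order $2$). This already settles one direction of the equivalence.

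For the inequality, write $R\cong R_1\times\cdots\times R_t$ as a product of local rings, with $|R_i|=p_i^{\alpha_i}$. By Lemma \ref{l1}, $zp_{_k}(R)=\prod_{i=1}^{t}zp_{_k}(R_i)$. By Remark \ref{r1}, $zp_{_k}(R_i)\le\mathcal{B}_k(p_i;\alpha_i)$, and by Lemma \ref{p4}(3), $\mathcal{B}_k(p_i;\alpha_i)\le\mathcal{B}_k(2;1)<1$. Since $(0,\dots,0)\in Ann_{_k}(R_i)$ forces $zp_{_k}(R_i)>0$, each factor lies in the interval $\big(0,\mathcal{B}_k(2;1)\big]$. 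If $t=1$ this gives $zp_{_k}(R)=zp_{_k}(R_1)\le\mathcal{B}_k(2;1)$; if $t\ge2$, then $zp_{_k}(R)\le\mathcal{B}_k(2;1)\prod_{i=2}^{t}zp_{_k}(R_i)<\mathcal{B}_k(2;1)$, because each of the remaining factors is strictly less than $1$. In either case the claimed bound holds.

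For the equality case, the chain above shows that $zp_{_k}(R)=\mathcal{B}_k(2;1)$ forces $t=1$ (otherwise the inequality is strict) and $zp_{_k}(R_1)=\mathcal{B}_k(p_1;\alpha_1)=\mathcal{B}_k(2;1)$. By the equality clause of Lemma \ref{p4}(3), the identity $\mathcal{B}_k(p_1;\alpha_1)=\mathcal{B}_k(2;1)$ alone forces $p_1=2$ and $\alpha_1=1$, so $R=R_1$ is a ring of order $2$, hence $R\cong\mathbb{Z}_2$. The converse is the computation $zp_{_k}(\mathbb{Z}_2)=\mathcal{B}_k(2;1)$ recorded at the outset.

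I do not expect a genuine obstacle: every ingredient (Lemma \ref{l1}, Remark \ref{r1}, Lemma \ref{p4}(3), Corollary \ref{c7}) is already available. The only step needing slight care is the equality analysis, where one must (i) rule out $t\ge2$ using that every local factor is strictly below $1$ (which is immediate from $\mathcal{B}_k(2;1)<1$), and (ii) observe that the equality condition in Lemma \ref{p4}(3) by itself pins down $p_1=2$, $\alpha_1=1$, so no separate discussion of the structure of a local ring of order $2$ is required.
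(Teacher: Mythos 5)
Your proof is correct and follows essentially the same route as the paper: decompose $R$ into local factors, apply the local bound $\mathcal{B}_k(p;\alpha)$ from Remark \ref{r1}, the monotonicity in Lemma \ref{p4}(3), and multiplicativity from Lemma \ref{l1}. Your treatment of the equality case is slightly more explicit than the paper's (which simply asserts it in the local case), but the substance is identical.
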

\begin{proof}
	First suppose that $R$ is local. Then $|R|=p^\alpha$ for some prime $p$ and positive integer $\alpha$. So, $zp_{_k}(R)\le \mathcal{B}_k(p; \alpha)$, and by Lemma \ref{p4}, we have $zp_{_k}(R)\le \mathcal{B}_k(p; \alpha)\le \mathcal{B}_k(2; 1)$. Again, the equality holds if and only if $R\cong\mathbb{Z}_2$.
	
	Next, suppose that $R$ is non-local. Then $R\cong R_1\times...\times R_t, t\ge 2$, where each $R_i$ is local with $zp_{_k}(R_i)\le\mathcal{B}_k(2; 1)$, and hence, $zp_{_k}(R)<\mathcal{B}_k(2; 1)$.
\end{proof}

\begin{theorem}\label{t10}
	Let $R$ be a local ring. Then
	\begin{enumerate}
		\item for $k=2$ and $3$, $zp_{_k}(R)\notin(\mathcal{B}_k(3; 1), \mathcal{B}_k(2; 1))$ and for $k\ge4$, $zp_{_k}(R)\notin(\mathcal{B}_k(2; 2), \mathcal{B}_k(2; 1))$. Moreover, $zp_{_k}(R)=\mathcal{B}_k(3; 1)$ if and only if $R\cong\mathbb{Z}_3$, and $zp_{_k}(R)=\mathcal{B}_k(2; 2)$ if and only if $R\cong\mathbb{Z}_4$ or $R\cong\mathbb{Z}_2*\mathbb{Z}_2$.
		
		\item if $zp_{_k}(R)\ge\mathcal{B}_k(2; 3)$, then $R$ is isomorphic to one of the rings:
		\begin{enumerate}
			\item[{(i)}] $\mathbb{Z}_2, \mathbb{Z}_3, \mathbb{Z}_4, \mathbb{Z}_2 * \mathbb{Z}_2, \mathbb{Z}_4 * \mathbb{Z}_2, \mathbb{Z}_2 * (\mathbb{Z}_2)^2, GF(4)$ when $k=2$.
			
			\medspace
			
			\item[{(ii)}] $\mathbb{Z}_2, \mathbb{Z}_3, \mathbb{Z}_4, \mathbb{Z}_2 * \mathbb{Z}_2, \mathbb{Z}_4 * \mathbb{Z}_2, \mathbb{Z}_2 * (\mathbb{Z}_2)^2$ when $2<k<6$.
			
			\medspace
			
			\item[{(iii)}] $\mathbb{Z}_2, \mathbb{Z}_4, \mathbb{Z}_2 * \mathbb{Z}_2, \mathbb{Z}_4 * \mathbb{Z}_2, \mathbb{Z}_2 * (\mathbb{Z}_2)^2$ when $6\le k$.
		\end{enumerate}
	\end{enumerate}
\end{theorem}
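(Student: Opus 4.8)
The plan is to reduce everything to the quantitative comparisons already available in Lemma \ref{p4} and the structural input of Proposition \ref{l7} (and its iterated/idealization versions), so that almost no new computation is required. Throughout, $R$ is a finite local ring, hence $|R| = p^\alpha$ and $|Z(R)| = |\mathcal{M}| = p^i$ for some $0 \le i \le \alpha - 1$ by Lemma \ref{l3}, and $zp_{_k}(R) \le \mathcal{B}_k(p;\alpha)$ by Remark \ref{r1}, with equality precisely when $\mathcal{M}^2 = 0$ and $i = \alpha - 1$, i.e. $R \cong \mathbb{Z}_p*(\mathbb{Z}_p)^{\alpha-1}$ or $R \cong \mathbb{Z}_{p^2}*(\mathbb{Z}_p)^{\alpha-2}$ by Proposition \ref{l7}.

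For part (1): by Lemma \ref{p4}(3), $\mathcal{B}_k(2;1)$ is the strict maximum of $\mathcal{B}_k(p;\alpha)$ over all $(p,\alpha) \ne (2,1)$, and by parts (1) and (2) of that lemma the second-largest value among the $\mathcal{B}_k(p;\alpha)$ is whichever of $\mathcal{B}_k(3;1)$ and $\mathcal{B}_k(2;2)$ is larger — namely $\mathcal{B}_k(3;1)$ for $k = 2, 3$ and $\mathcal{B}_k(2;2)$ for $k \ge 4$, by Lemma \ref{p4}(4). Since $zp_{_k}(R) \le \mathcal{B}_k(p;\alpha)$ and the $\mathcal{B}_k(p;\alpha)$ omit the open interval between the top two values, no local $R$ can have $zp_{_k}(R)$ strictly between them; this gives the claimed empty intervals. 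The endpoint statements follow because $zp_{_k}(R) = \mathcal{B}_k(3;1)$ forces $zp_{_k}(R) \ge \mathcal{B}_k(3;1) > \mathcal{B}_k(p;\alpha)$ for all other $(p,\alpha)$ except $(2,1)$, and $zp_{_k}(R) \le \mathcal{B}_k(2;1)$ with equality only for $\mathbb{Z}_2$; so $R$ must live over $(p,\alpha) = (3,1)$, whence $R \cong \mathbb{Z}_3$. Similarly $zp_{_k}(R) = \mathcal{B}_k(2;2)$ (for $k \ge 4$, where this is the runner-up) forces $|R| = p^\alpha$ with $\mathcal{B}_k(p;\alpha) \ge \mathcal{B}_k(2;2)$, hence $(p,\alpha) \in \{(2,1),(3,1),(2,2)\}$; the first two are excluded since their extremal $zp_{_k}$ values are $\mathcal{B}_k(2;1)$ and $\mathcal{B}_k(3;1) \ne \mathcal{B}_k(2;2)$, so $(p,\alpha) = (2,2)$ and the equality case of Remark \ref{r1} gives $R \cong \mathbb{Z}_4$ or $R \cong \mathbb{Z}_2*\mathbb{Z}_2$. (For $k = 2, 3$ one argues the same way with the roles of $\mathcal{B}_k(3;1)$ and $\mathcal{B}_k(2;2)$ interchanged.)

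For part (2): the constraint $zp_{_k}(R) \ge \mathcal{B}_k(2;3)$ together with $zp_{_k}(R) \le \mathcal{B}_k(p;\alpha)$ forces $\mathcal{B}_k(p;\alpha) \ge \mathcal{B}_k(2;3)$, which by monotonicity (Lemma \ref{p4}(1),(2)) restricts $(p,\alpha)$ to a short finite list: at $p = 2$ we need $\alpha \le 3$, and for odd $p$ we need $\mathcal{B}_k(p;1) \ge \mathcal{B}_k(2;3)$, which — as in the proof of Lemma \ref{p4}(4) — holds only for $p = 3$ and only for small $k$ (and for $p = 5$ only for $k = 2$, etc.). One then lists, for each surviving $(p,\alpha)$, all local rings of that order together with their $zp_{_k}$ values (using Corollary \ref{c7} for fields, Theorem \ref{p1}/Remark \ref{r1} for the $\mathcal{M}^2=0$ cases, and Proposition \ref{t5} with Lemma \ref{l4} to handle the remaining local rings of order $8$ or $16$), and keeps exactly those whose value is $\ge \mathcal{B}_k(2;3)$. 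The $k$-dependence of the answer is exactly the $k$-dependence of which odd-$p$ or large-$\alpha$ candidates survive the inequality $\mathcal{B}_k(p;\alpha) \ge \mathcal{B}_k(2;3)$: $GF(4)$ (value $\mathcal{B}_k$-incomparable, equal to $\tfrac{4^k - 3^k}{4^k}$) drops out once $k \ge 3$, and $\mathbb{Z}_3$ drops out once $k \ge 6$, yielding the three cases (i)–(iii).

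The main obstacle is the bookkeeping in part (2): one must be sure the finite list of local rings of orders $4, 8, 9, 16$ (and their $zp_{_k}$ values) is complete and correctly compared against $\mathcal{B}_k(2;3)$ uniformly in $k$. This amounts to verifying a handful of polynomial inequalities in $k$ of the same flavor as Lemma \ref{p4}(4) — e.g. pinning down the exact $k$ at which $\tfrac{4^k-3^k}{4^k}$ falls below $\mathcal{B}_k(2;3)$, and likewise for $\mathbb{Z}_3$ — which is elementary but must be done carefully to get the cutoffs $k = 3$ and $k = 6$ right. Everything else is a direct appeal to the monotonicity lemmas and the structural classifications already proved.
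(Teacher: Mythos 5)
Your proposal is correct and follows essentially the same route as the paper: bound $zp_{_k}(R)$ by $\mathcal{B}_k(p;\alpha)$, use the monotonicity statements of Lemma \ref{p4} to restrict $(p,\alpha)$, and finish the equality/enumeration cases with the $\mathcal{M}^2=0$ classification of Proposition \ref{l7} and Remark \ref{r1}; the only real difference is that the paper dispatches $k=2$ and $k=3$ in part (2) by citing prior work and reduces $k=5$ to $k=4$, whereas you treat all $k$ uniformly by direct enumeration. One small slip: $\mathcal{B}_2(5;1)=9/25<3/8=\mathcal{B}_2(2;3)$, so $p=5$ never survives your first filter even for $k=2$ (and order $16$ is likewise excluded by $\alpha\le 3$) --- harmless, since your second filter would discard $\mathbb{Z}_5$ anyway, but the parenthetical is wrong as stated.
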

\begin{proof}
	(1) Since $R$ is local, therefore, $|R|=p^\alpha$ for some prime $p$ and positive integer $\alpha$. First, suppose that $k=2$ or 3. Assume that $zp_{_k}(R)>\mathcal{B}_k(3; 1)$. This implies $p=2$, and therefore, $|R|=2^\alpha$. Now, for $\alpha\ge 2$, $zp_{_k}(R)\le \mathcal{B}_k(2; 2)$, and since for $k=2$ and 3, $\mathcal{B}_k(2; 2)<\mathcal{B}_k(3; 1)$, this implies $\alpha=1$. In other words, $R\cong \mathbb{Z}_2$, and therefore,  $zp_{_k}(R)\notin(\mathcal{B}_k(3; 1), \mathcal{B}_k(2; 1))$, and by Remark \ref{r1}, $zp_{_k}(R)= \mathcal{B}_k(3; 1)$ if and only if $R\cong\mathbb{Z}_3$. Next, suppose that $k\ge4$. Assume that $zp_{_k}(R)>\mathcal{B}_k(2; 2)$. For $p\ge3$, we have $zp_{_k}(R)\le\mathcal{B}_k(3; 1)$. But for $k\ge4$, $\mathcal{B}_k(3; 1)<\mathcal{B}_k(2; 2)$, a contradiction. So, $p=2$, and therefore, $|R|=2^\alpha$. Also, for $\alpha\ge2$, $zp_{_k}(R)\le\mathcal{B}_k(2; 2)$. Thus, $\alpha=1$, which implies $R\cong\mathbb{Z}_2$. Therefore,  $zp_{_k}(R)\notin(\mathcal{B}_k(2; 2), \mathcal{B}_k(2; 1))$, and  $zp_{_k}(R)= \mathcal{B}_k(2; 2)$ if and only if $R\cong\mathbb{Z}_4$ or $R\cong\mathbb{Z}_2 * \mathbb{Z}_2$.
	
	\medskip
	
	(2) Suppose that $zp_{_k}(R)\ge\mathcal{B}_k(2; 3)$ and $|R|=p^\alpha$ for some prime $p$ and positive integer $\alpha$. 
	
	(i) For $k=2$ it is already proved, see \cite{emskhani1}. 
	
	(ii) Let $2<k<6$. For $k=3$ it is proved in \cite{sarma}. Observe that $zp_{_4}(R)\ge\mathcal{B}_4(2; 3)$ if and only if $zp_{_5}(R)\ge\mathcal{B}_5(2; 3)$. Hence, to complete the proof of (ii), it is enough to classify rings with $zp_{_4}(R)\ge\mathcal{B}_4(2; 3)$.  So, let $k=4$. Then
	\[\frac{3}{4}\le zp_{_4}(R)\le \frac{p^{\alpha-1}\left\{p^4-(p+3)(p-1)^{3}\right\}+4(p-1)^{3}}{p^{\alpha+3}}\le \frac{p^4-(p-1)^4}{p^4}.\]
	This implies $p\le3$. Let $p=2$. Then
	\[\frac{3}{4}\le\frac{11\cdot2^{\alpha+1}+4}{2^{\alpha+3}}.\]
	This implies  $\alpha\le 3$. For $\alpha=1$, $|R|=2$, that is, $R\cong\mathbb{Z}_2$ with $zp_{_4}(R)=\frac{15}{16}$. For $\alpha=2$, $|R|=4$ with $|\mathcal{M}|=1$ or 2. If $|\mathcal{M}|=1$, then $R\cong GF(4)$ with $zp_{_4}(R)=\frac{175}{256}<\frac{3}{4}$, a contradiction. If $|\mathcal{M}|=2$, then $R\cong\mathbb{Z}_4$ or $R\cong\mathbb{Z}_2*\mathbb{Z}_2$ with $zp_{_4}(R)=\frac{13}{16}$. For $\alpha=3$, $|R|=8$ with $|\mathcal{M}|=1$ or 4. If $|\mathcal{M}|=1$, then $R\cong GF(8)$ with $zp_{_4}(R)=\frac{1695}{4096}<\frac{3}{4}$, a contradiction. If $|\mathcal{M}|=4$, then there are two possibilities: $\mathcal{M}^2\ne0$ and $\mathcal{M}^2=0$. Suppose that  $\mathcal{M}^2\ne0$. Then by Remark \ref{r1}, $zp_{_4}(R)<\mathcal{B}_4(2; 3)$, a contradiction. Thus, $\mathcal{M}^2=0$, which implies $R\cong\mathbb{Z}_2*(\mathbb{Z}_2)^2$ or $R\cong\mathbb{Z}_4*\mathbb{Z}_2$ with $zp_{_4}(R)=\frac{3}{4}$. Next, let $p=3$. Then 
	\[\frac{3}{4}\le zp_{_4}(R)\le\frac{33\cdot3^{\alpha-1}+32}{3^{\alpha+3}},\] 
	which implies $\alpha= 1$. Thus, $|R|=3$, or $R\cong\mathbb{Z}_3$ with $zp_{_4}(R)=\frac{65}{81}$. Therefore, the possible local rings $R$ with $zp_{_4}(R)\ge\mathcal{B}_4(2; 3)$ are: $\mathbb{Z}_2, \mathbb{Z}_3, \mathbb{Z}_4, \mathbb{Z}_2 * \mathbb{Z}_2, \mathbb{Z}_4 * \mathbb{Z}_2, \mathbb{Z}_2 * (\mathbb{Z}_2)^2$.
	
	(iii) Assume that $k\ge6$. Then, 
	\[\mathcal{B}_k(2; 3)=\frac{2^{k+2}-3k-4}{2^{k+2}}\le zp_{_k}(R)\le\frac{p^k-(p-1)^k}{p^k}\]
	\[\implies\frac{3k+4}{2^{k+2}}\ge\left(\frac{p-1}{p}\right)^k.\] Assume that $p>2$, that is, $p=2+l$ for some integer $l\ge 1$. Thus,
	\[\frac{3k+4}{2^{k+2}}\ge\left(\frac{l+1}{l+2}\right)^k\ge\left(\frac{2}{3}\right)^k\]\[\implies k\cdot3^{k+1}+4\cdot3^k\ge2^{2k+2}, \text{ a contradiction.}\] Therefore, $p=2$. Thus, $\mathcal{B}_k(2; 3)\le zp_{_k}(R)\le \mathcal{B}_k(2; \alpha)$ implies $\alpha\le3$. For $\alpha=1$, $|R|=2$, or $R\cong\mathbb{Z}_2$ with $zp_{_k}(R)=\frac{2^k-1}{2^k}$. For $\alpha=2$, $|R|=4$ with $|\mathcal{M}|=1$ or 2. If $|\mathcal{M}|=1$, then $R\cong GF(4)$ with $zp_{_k}(R)=\frac{4^k-3^k}{4^k}<\mathcal{B}_k(2; 3)$, a contradiction. If $|\mathcal{M}|=2$, then $R\cong \mathbb{Z}_4$ or $R\cong \mathbb{Z}_2*\mathbb{Z}_2$ with $zp_{_k}(R)=\frac{2^{k+1}-k-2}{2^{k+1}}$. Now, for $\alpha=3$, $|R|=8$ with $|\mathcal{M}|=1$ or 4. If $|\mathcal{M}|=1$, then $R\cong GF(8)$ with $zp_{_k}(R)=\frac{8^k-7^k}{8^k}<\mathcal{B}_k(2; 3)$, a contradiction. Again, if $|\mathcal{M}|=4$, then as proved earlier $\mathcal{M}^2=0$, which implies $R\cong \mathbb{Z}_4 * \mathbb{Z}_2$, or $R\cong \mathbb{Z}_2 * (\mathbb{Z}_2)^2$ with $zp_{_k}(R)=\frac{2^{k+2}-3k-4}{2^{k+2}}$. This completes the proof.
\end{proof}

\begin{corollary}\label{c4}
	There are exactly 7, 6 and 5 local rings (up to isomorphism) with $zp_{_k}(R)\ge \mathcal{B}_k(2; 3)$ for $k=2$, $2< k<6$ and $6\le k$, respectively.
\end{corollary}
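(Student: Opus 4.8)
The plan is to read Corollary~\ref{c4} purely as a bookkeeping consequence of the classification carried out in Theorem~\ref{t10}(2), so the only real task is to count the rings appearing in each list and to check that no two of them are isomorphic. First I would invoke Theorem~\ref{t10}(2) directly: for $k=2$ it asserts that a local ring $R$ with $zp_{_k}(R)\ge\mathcal{B}_k(2;3)$ is isomorphic to one of $\mathbb{Z}_2,\mathbb{Z}_3,\mathbb{Z}_4,\mathbb{Z}_2*\mathbb{Z}_2,\mathbb{Z}_4*\mathbb{Z}_2,\mathbb{Z}_2*(\mathbb{Z}_2)^2,GF(4)$ — seven rings; for $2<k<6$ it gives the same list with $GF(4)$ removed — six rings; and for $6\le k$ it gives the further list with $\mathbb{Z}_3$ also removed — five rings. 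Conversely, every ring in each list does satisfy $zp_{_k}(R)\ge\mathcal{B}_k(2;3)$ for the stated range of $k$: this is either computed explicitly inside the proof of Theorem~\ref{t10}(2) (e.g. $zp_{_4}(\mathbb{Z}_2)=\tfrac{15}{16}$, $zp_{_4}(\mathbb{Z}_4)=\tfrac{13}{16}$, $zp_{_4}(\mathbb{Z}_2*(\mathbb{Z}_2)^2)=\tfrac34$, etc.) or follows from Remark~\ref{r1}, which identifies $\mathbb{Z}_2*(\mathbb{Z}_2)^{\alpha-1}$ and $\mathbb{Z}_{p^2}*(\mathbb{Z}_p)^{\alpha-2}$ as the extremal local rings attaining $\mathcal{B}_k(p;\alpha)$ and hence gives $zp_{_k}=\mathcal{B}_k(2;1)\ge\mathcal{B}_k(2;3)$, $\mathcal{B}_k(2;2)\ge\mathcal{B}_k(2;3)$, $\mathcal{B}_k(2;3)$ itself, and for $\mathbb{Z}_3$ the value $\mathcal{B}_k(3;1)$, which by Lemma~\ref{p4}(4) is $\ge\mathcal{B}_k(2;2)\ge\mathcal{B}_k(2;3)$ precisely when $k<6$.

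Next I would verify that the rings within each list are pairwise non-isomorphic, so that the counts $7$, $6$, $5$ are exact and not inflated. The orders separate most of them: $\mathbb{Z}_2$ has order $2$; $\mathbb{Z}_3$ has order $3$; $\mathbb{Z}_4$, $\mathbb{Z}_2*\mathbb{Z}_2$ and $GF(4)$ have order $4$; $\mathbb{Z}_4*\mathbb{Z}_2$ and $\mathbb{Z}_2*(\mathbb{Z}_2)^2$ have order $8$. Among the order-$4$ rings, $GF(4)$ is a field while $\mathbb{Z}_4$ and $\mathbb{Z}_2*\mathbb{Z}_2$ are not, and $\mathbb{Z}_4$ has characteristic $4$ whereas $\mathbb{Z}_2*\mathbb{Z}_2$ has characteristic $2$, so all three are distinct. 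Among the order-$8$ rings, $\mathbb{Z}_4*\mathbb{Z}_2$ has characteristic $4$ and $\mathbb{Z}_2*(\mathbb{Z}_2)^2$ has characteristic $2$, so they are distinct as well. Hence each list has the claimed number of members, and dropping $GF(4)$ from the $k=2$ list and then $\mathbb{Z}_3$ from the $2<k<6$ list leaves exactly $6$ and $5$ rings respectively.

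The one point deserving a little care — and the only place where this is more than mechanical — is the boundary behaviour at $k=6$: one must confirm that $\mathbb{Z}_3$ really does fall out of the list exactly at $k=6$ and not earlier or later. This is handled by Lemma~\ref{p4}(4), which states $\mathcal{B}_k(3;1)>\mathcal{B}_k(2;2)$ for $k=2,3$ and $\mathcal{B}_k(3;1)<\mathcal{B}_k(2;2)$ for $k\ge4$, combined with the numerical check inside Theorem~\ref{t10}(2)(ii)--(iii) that $\mathcal{B}_k(3;1)\ge\mathcal{B}_k(2;3)$ iff $k<6$ (equivalently $3^k(k+2)\ge 2^{2k+1}$, which holds for $k\le5$ and fails for $k\ge6$). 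Since Theorem~\ref{t10}(2) already encodes this, I would simply cite it. Thus the proof of Corollary~\ref{c4} amounts to the sentence: by Theorem~\ref{t10}(2) the local rings in question are exactly those enumerated in cases (i), (ii), (iii), and a direct inspection of orders and characteristics shows these lists have $7$, $6$ and $5$ pairwise non-isomorphic members, respectively.

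\begin{proof}
	By Theorem~\ref{t10}(2), a local ring $R$ satisfies $zp_{_k}(R)\ge\mathcal{B}_k(2;3)$ if and only if $R$ is isomorphic to one of the rings listed in (i) for $k=2$, in (ii) for $2<k<6$, and in (iii) for $6\le k$; indeed the forward implication is the content of that theorem, while the converse is the collection of explicit evaluations of $zp_{_k}$ carried out in its proof together with Remark~\ref{r1} (which gives $zp_{_k}(\mathbb{Z}_2)=\mathcal{B}_k(2;1)$, $zp_{_k}(\mathbb{Z}_4)=zp_{_k}(\mathbb{Z}_2*\mathbb{Z}_2)=\mathcal{B}_k(2;2)$, $zp_{_k}(\mathbb{Z}_4*\mathbb{Z}_2)=zp_{_k}(\mathbb{Z}_2*(\mathbb{Z}_2)^2)=\mathcal{B}_k(2;3)$) and Lemma~\ref{p4}, by which $\mathcal{B}_k(2;1)\ge\mathcal{B}_k(2;2)\ge\mathcal{B}_k(2;3)$ and $zp_{_k}(\mathbb{Z}_3)=\mathcal{B}_k(3;1)\ge\mathcal{B}_k(2;3)$ precisely when $k<6$, while $zp_{_k}(GF(4))=\tfrac{4^k-3^k}{4^k}\ge\mathcal{B}_k(2;3)$ only when $k=2$.

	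It remains to check that the rings within each list are pairwise non-isomorphic. Their orders are: $|\mathbb{Z}_2|=2$, $|\mathbb{Z}_3|=3$, $|\mathbb{Z}_4|=|\mathbb{Z}_2*\mathbb{Z}_2|=|GF(4)|=4$, and $|\mathbb{Z}_4*\mathbb{Z}_2|=|\mathbb{Z}_2*(\mathbb{Z}_2)^2|=8$. Among the rings of order $4$, $GF(4)$ is a field whereas $\mathbb{Z}_4$ and $\mathbb{Z}_2*\mathbb{Z}_2$ are not, and $\mathbb{Z}_4$ has characteristic $4$ while $\mathbb{Z}_2*\mathbb{Z}_2$ has characteristic $2$; hence these three are mutually non-isomorphic. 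Among the rings of order $8$, $\mathbb{Z}_4*\mathbb{Z}_2$ has characteristic $4$ and $\mathbb{Z}_2*(\mathbb{Z}_2)^2$ has characteristic $2$, so they too are non-isomorphic. Consequently the list in (i) consists of $7$ pairwise non-isomorphic rings, the list in (ii) of $6$ (the same list with $GF(4)$ removed), and the list in (iii) of $5$ (with $\mathbb{Z}_3$ further removed). This proves the assertion.
\end{proof}
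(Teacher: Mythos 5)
Your proof is correct and takes essentially the same route as the paper, which states Corollary~\ref{c4} without proof as an immediate count of the lists in Theorem~\ref{t10}(2); the two points you make explicit --- that each listed ring actually attains the bound (via the evaluations of $zp_{_k}$ inside the proof of Theorem~\ref{t10}(2) and Remark~\ref{r1}) and that the listed rings are pairwise non-isomorphic --- are exactly the implicit content needed for the word ``exactly.'' One small slip in your preliminary discussion: the inequality $3^k(k+2)\ge 2^{2k+1}$ is the reformulation of $\mathcal{B}_k(3;1)\ge\mathcal{B}_k(2;2)$ from Lemma~\ref{p4}(4) (which already fails for $k\ge 4$), not of $\mathcal{B}_k(3;1)\ge\mathcal{B}_k(2;3)$, whose correct reformulation is $2^{2k+2}\le 3^k(3k+4)$ and which holds precisely for $k\le 5$; this does not affect your proof body, which states the right condition.
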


Since for a local ring $R$ of order $p^\alpha$ for some prime $p$ and positive integer $\alpha$, $zp_{_k}(R)\le\mathcal{B}_k(p;\alpha),$ and so, $zp_{_k}(R)\le\mathcal{B}_k(2;\alpha)$, therefore, we get 
$$zp_{_k}(R)\le \frac{2^k-k-1}{2^k}+\frac{k}{2^{k+\alpha-1}}.$$
We conclude this paper discussing our next result in which we obtain all possible primes $p$ which divide the order of the local rings $R$ that satisfy the condition: $zp_{_k}(R)\ge\frac{2^k-k-1}{2^k}$ for $k\ge 2$.

\begin{theorem}\label{t8}
	Let $R$ be a local ring such that  $zp_{_k}(R)\ge \frac{2^{k}-k-1}{2^{k}}$, where $k \ge 2$ is an integer. If $p$ is a prime such that $p~\big | |R|$, then
	\begin{enumerate}
		\item $k=2$, only if $p\in\{2,3,5,7\}$.
		\item $2<k<8$, only if  $p\in\{2,3\}$.
		\item $8\le k$, only if $p=2$.
	\end{enumerate}
\end{theorem}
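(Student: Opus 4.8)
The plan is to reduce everything to the inequality $zp_{_k}(R)\le \mathcal{B}_k(2;\alpha)$ already recorded just before the statement, namely
\[
zp_{_k}(R)\le \frac{2^k-k-1}{2^k}+\frac{k}{2^{k+\alpha-1}},
\]
valid for any local ring $R$ of order $p^\alpha$, and then to compare this with the hypothesis $zp_{_k}(R)\ge \tfrac{2^k-k-1}{2^k}$. Since $p\mid |R|$ and $R$ is local of order $p^\alpha$, the relevant prime is precisely $p$; so it suffices to show that if $p$ is too large the upper bound for $zp_{_k}(R)$ drops below $\tfrac{2^k-k-1}{2^k}$. First I would invoke Proposition \ref{t5} together with Remark \ref{r1} (or directly Lemma \ref{p4}(3), which gives $zp_{_k}(R)\le \mathcal{B}_k(p;\alpha)\le \mathcal{B}_k(2;\alpha)$ only when $p=2$) to get, for $p\ge 3$, the sharper bound $zp_{_k}(R)\le \mathcal{B}_k(p;1)=\tfrac{p^k-(p-1)^k}{p^k}$, using Lemma \ref{p4}(2) to discard $\alpha\ge 2$ once we know the bound for $\alpha=1$ is already forced.

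Next I would combine the hypothesis with this bound to obtain the master inequality
\[
\frac{2^k-k-1}{2^k}\le \frac{p^k-(p-1)^k}{p^k}\quad\Longleftrightarrow\quad \frac{k+1}{2^k}\ge\left(\frac{p-1}{p}\right)^k,
\]
i.e. $p^k(k+1)\ge 2^k(p-1)^k$. Writing $p=2+l$ with $l\ge 1$ (the case $p=2$ being automatically admissible in every item), the right-hand side is at least $2^k\left(\tfrac{l+1}{l+2}\right)^k$, and one analyses for which $l$ this forces a contradiction as $k$ grows. For each fixed small $k$ this is a finite check on $p$: for $k=2$ the inequality $p^2\cdot 3\ge 4(p-1)^2$ fails precisely when $p\ge 11$ (a short numeric verification at $p=2,3,5,7,11$), giving item (1); for $2<k<8$ one checks that $p=3$ survives but $p=5$ already fails at $k=3$ (indeed $125\cdot 4 < 8\cdot 64$), so item (2) follows after verifying $p=3$ works throughout $3\le k\le 7$ and $p\ge 5$ fails for all such $k$; and for $k\ge 8$ one shows $p=3$ itself fails, i.e. $3^k(k+1)< 2^k\cdot 2^k = 4^k$, equivalently $k+1<(4/3)^k$, which holds for all $k\ge 8$ and is monotone thereafter, hence $p=2$ only, giving item (3).

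The main obstacle is organising the case analysis cleanly rather than any single hard estimate: one must be careful that the bound $zp_{_k}(R)\le\mathcal{B}_k(p;1)$ is legitimate for $\alpha\ge 2$ as well (it is, since $\mathcal{B}_k(p;\alpha)\le\mathcal{B}_k(p;1)$ by Lemma \ref{p4}(2)), and one must pin down the exact threshold in $p$ for each range of $k$ by checking monotonicity of $g(p)=p^k(k+1)-2^k(p-1)^k$ in $p$ and of $h(k)=(k+1)-((p-1)/p)^k\cdot\text{(stuff)}$ in $k$. Concretely I would: (a) state the reduction to $\tfrac{k+1}{2^k}\ge\left(\tfrac{p-1}{p}\right)^k$; (b) observe the left side is decreasing in $k$ while, for fixed $p\ge 3$, the right side is increasing in $k$, so for each $p$ there is a cutoff in $k$ beyond which the inequality fails, and symmetrically for each $k$ a cutoff in $p$; (c) compute these cutoffs by direct substitution at the boundary values $p\in\{3,5,7,11\}$ and $k\in\{2,3,7,8\}$; and (d) read off (1)--(3). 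The only genuinely delicate point is confirming that for $k\in\{3,4,5,6,7\}$ the prime $p=3$ does satisfy $3^k(k+1)\ge 2^{k+1}(\text{i.e. }\ge 4^k)$ — which it does, since $(4/3)^k\le (4/3)^7\approx 7.49 < 8\le k+1$ — while $p=5$ fails already at $k=3$; this boundary computation is where a small slip would propagate, so it should be done explicitly.
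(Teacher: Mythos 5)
Your proposal is correct and follows essentially the same route as the paper: reduce via $zp_{_k}(R)\le\mathcal{B}_k(p;\alpha)\le\mathcal{B}_k(p;1)$ to the master inequality $\left(\frac{p-1}{p}\right)^k\le\frac{k+1}{2^k}$, then settle each range of $k$ by direct checks at small primes and, for $k\ge 8$, by the estimate $\left(\frac{p-1}{p}\right)^k\ge\left(\frac{2}{3}\right)^k$ forcing $(4/3)^k\le k+1$, a contradiction. The boundary computations you flag (e.g. $p=7$ passes and $p=11$ fails at $k=2$; $p=5$ fails at $k=3$) match the paper's conclusions; note only that the theorem is an ``only if'' statement, so verifying that $p=3$ satisfies the inequality for $3\le k\le 7$ is not actually required.
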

\begin{proof}
	Let $|R|=p^\alpha$ for some positive integer $\alpha$. Then
	\begin{align*}
		&\frac{2^{k}-k-1}{2^{k}}\le zp_{_k}(R)\le \mathcal{B}_k(p;\alpha)\le \mathcal{B}_k(p;1) \\
		\implies&\frac{2^{k}-k-1}{2^{k}}\le\frac{p^k-(p-1)^{k}}{p^k}\\
		\implies & \left(\frac{p-1}{p}\right)^k\le \frac{k+1}{2^k}.
	\end{align*}
	
	(1) Suppose that $k=2$. Then $\left(\frac{p-1}{p}\right)^2\le\frac{3}{4}$, which implies $p\le7$. 
	
	(2) Suppose that $k=3$. Then  $\left(\frac{p-1}{p}\right)^3\le\frac{1}{2}$, which implies $p\le3$. Similarly, it can be shown that for $k=4,5,6,7$, the possible values of $p$ are 2 and 3.
	
	(3) Now, suppose that $k\ge8$. We assume that $p\ge 3$. Then $\left(\frac{p-1}{p}\right)^k\ge \left(\frac{2}{3}\right)^k$. This implies $\left(\frac{2}{3}\right)^k\le\frac{k+1}{2^k}$, or $\left(\frac{4}{3}\right)^k\le k+1$, which is a contradiction. Thus, the only possible value of $p$ is 2.
\end{proof}

\bibliographystyle{amsplain}

\end{document}